\numberwithin{equation}{section}
\newcommand{\bbold}{\mathbb}
\def\Q { {\bbold Q} }
\def\Z { {\bbold Z} }
\def\C { {\bbold C} }
\def\N { {\bbold N} }
\def \order{\operatorname{order}}
\renewcommand\epsilon{\varepsilon}
\renewcommand\rho{\varrho}
\def \<{\langle}
\def \>{\rangle}
\def \tilde {\widetilde}
\def \supp {\operatorname{supp}}
\def \((  {(\!(}
\def \)) {)\!)}
\DeclareMathSymbol{\precequ}{\mathrel}{symbols}{"16}
\DeclareMathSymbol{\succequ}{\mathrel}{symbols}{"17}
\newtheorem{theorem}{Theorem}[section]
\newtheorem{lemma}[theorem]{Lemma}
\newtheorem{prop}[theorem]{Proposition}
\newtheorem{cor}[theorem]{Corollary}
\theoremstyle{definition}
\newtheorem{definition}[theorem]{Definition}
\theoremstyle{remark}
\newtheorem*{example}{Example}
\newtheorem{exampleNumbered}[theorem]{Example}
\newtheorem*{remark}{Remark}
\newtheorem*{conjecture}{Conjecture}
\newcommand{\abs}[1]{\lvert#1\rvert}
\newcommand{\dabs}[1]{\lVert#1\rVert}
\def \id{\operatorname{id}}
\def \bi{{\boldsymbol{i}}}
\def \bj{{\boldsymbol{j}}}
\renewcommand\leq{\leqslant}
\renewcommand\geq{\geqslant}
\renewcommand\frak{\mathfrak}
\DeclareMathAlphabet{\mathbf}{OML}{cmm}{b}{it}
\DeclareFontFamily{U}{fsy}{}
\DeclareFontShape{U}{fsy}{m}{n}{<->s*[.9]psyr}{}
\DeclareSymbolFont{der@m}{U}{fsy}{m}{n}
\DeclareMathSymbol{\der}{\mathord}{der@m}{182}
\DeclareSymbolFont{der@m}{U}{fsy}{m}{n}
\DeclareMathSymbol{\derdelta}{\mathord}{der@m}{100}
\newcommand\wt{\operatorname{wt}}
\newcommand\val{v}
\DeclareFontFamily{OMS}{smallo}{}
\DeclareFontShape{OMS}{smallo}{m}{n}{<->s*[.7]cmsy10}{}
\DeclareSymbolFont{smallo@m}{OMS}{smallo}{m}{n}
\DeclareMathSymbol{\smallo}{\mathord}{smallo@m}{79}
\DeclareFontFamily{OMS}{largerdot}{}
\DeclareFontShape{OMS}{largerdot}{m}{n}{<->s*[.8]cmsy10}{}
\DeclareSymbolFont{largerdot@m}{OMS}{largerdot}{m}{n}
\DeclareMathSymbol{\largerdot}{\mathord}{largerdot@m}{15}
\newcommand{\tr}{\operatorname{\mathfrak{tr}}}
\newcommand\diag{\operatorname{diag}}
\DeclareMathSymbol{\llambda}{\mathord}{der@m}{108}
\DeclareMathSymbol{\rrho}{\mathord}{der@m}{114}
\newcommand\itval{\operatorname{itval}}
\newcommand\itlog{\operatorname{itlog}}
\title[Logarithms of Iteration Matrices]{Logarithms of Iteration Matrices, and Proof of a Conjecture by Shadrin and Zvonkine}
\author{Matthias Aschenbrenner}
\address{University of California, Los Angeles, California, USA}
\email{matthias@math.ucla.edu}
\thanks{Partially supported by a grant from the National Science Foundation.}
\begin{document}

\begin{abstract}
A proof of a conjecture by Shadrin and Zvonkine, relating the entries of a matrix arising in the study of Hurwitz numbers to a certain sequence of rational numbers, is given. The main tools used are iteration matrices of formal power series and their (matrix) logarithms. 
\end{abstract}

\maketitle

\footnotetext{November 2011}

\noindent
This note is devoted to the study of the somewhat mysterious-looking sequence
\begin{equation}\label{eq:mystery}\tag{S}
0, 1, -\frac{1}{2}, \frac{1}{2}, -\frac{2}{3}, \frac{11}{12}, -\frac{3}{4}, -\frac{11}{6}, \frac{29}{4}, \frac{493}{12}, -\frac{2711}{6}, -\frac{12406}{15}, \frac{2636317}{60}, \dots
\end{equation}
of rational numbers. I first encountered this sequence in ongoing joint work with van den Dries and van der Hoeven on asymptotic differential algebra \cite{AvdD}. It also appears in a conjecture made in a paper by Shadrin and Zvonkine \cite{SZ} in connection with a generating series for Hurwitz numbers (which count the number of ramified coverings of the sphere by a surface, depending on certain parameters like the degree of the covering and the genus of the surface). I came across \cite{SZ} by entering the numerators and denominators of the first few terms of \eqref{eq:mystery} into Sloane's {\it On-Line Encyclopedia of Integer Sequences}\/ \cite{OEIS}. (The numerator sequence is A134242, the denominator sequence is A134243.)
In this note we prove the conjecture from \cite{SZ}. In the course of doing so, we identify
a formula for the sequence  \eqref{eq:mystery}: denoting its $n$th term by $c_n$ (so $c_1=0$, $c_2=1$, $c_3=-\frac{1}{2}$ etc.), we have 
$$c_{n}=\sum_{\substack{1\leq k<n \\ 1<n_1<\cdots<n_{k-1}<n_k=n}}  \frac{(-1)^{k+1}}{k}{n_2\brace n_1} {n_3\brace n_2}\cdots {n_k\brace n_{k-1}}.$$ 
Here and below, we denote by ${j\brace i}$ the Stirling numbers of the second kind:  ${j\brace i}$ is the number of equivalence relations on a $j$-element set with $i$ equivalence classes. They obey the recurrence relation
$$
{j\brace i} = {j-1\brace i-1}+i{j-1\brace i} \qquad (i,j>0)$$
with initial conditions
$${0\brace 0} = 1, \ {0\brace i} = {j\brace 0} = 0 \qquad (i,j>0).$$
For example, we have
\begin{alignat*}{3}
1-\frac{1}{2}{3\brace 2} &= 1-\frac{1}{2}\cdot 3&=\,-\frac{1}{2}=c_3  &&  \\
1-\frac{1}{2}\left({4\brace 2}+{4\brace 3}\right)+\frac{1}{3}{3\brace 2}{4\brace 3} &= 
 1-\frac{1}{2}(7+6)+\frac{1}{3}\cdot 3\cdot 6 & =
\frac{1}{2}=c_4 && \\
\left.\begin{aligned}
\textstyle 1-\frac{1}{2}\left({5\brace 2}+{5\brace 3}+{5\brace 4}\right) + \    \\ 
\textstyle \frac{1}{3}\left({3\brace 2}{5\brace 3}+{4\brace 2}{5\brace 4}+{4\brace 3}{5\brace 4}\right) -  \ \\ \textstyle \frac{1}{4}{3\brace 2}{4\brace 3}{5\brace 4}\quad\ \end{aligned} \right\} &=
\left\{ \begin{aligned}
&1-\textstyle\frac{1}{2}(15+25+10)+ \\
&\textstyle\frac{1}{3}(3\cdot 25+7\cdot 10+6\cdot 10)- \\  &\textstyle \frac{1}{4}\cdot 3\cdot 6\cdot 10\end{aligned}\right\}
&=-\frac{2}{3} =c_5 && 
\end{alignat*}
A key concept for our study of \eqref{eq:mystery} is the {\it iteration matrix}\/ of a formal power series; these matrices are well-known in the iteration theory of analytic functions \cite{J1,J2} and in combinatorics \cite{Comtet-Book}. The iteration matrix of a power series $f\in\Q[[z]]$ of the form $f=z+z^2g$ ($g\in\Q[[z]]$) is a certain bi-infinite upper triangular matrix with rational entries associated to $f$.
After stating the conjecture of Shadrin and Zvonkine in Section~\ref{sec:conj} and making some preliminary reductions, we summarize some general definitions and basic facts about  triangular matrices in Section~\ref{sec:triangular} and introduce the group of iteration matrices in Section~\ref{sec:iteration}. In Section~\ref{sec:lie} we determine its Lie algebra of infinitesimal generators, by slightly generalizing results of Schippers \cite{Schippers}. These results tie in with a notion from classical iteration theory:
the infinitesimal generator of the iteration matrix of a formal power series $f$ as above is uniquely determined by another power series $\itlog(f)\in z^2\Q[[z]]$, introduced by Jabotinsky \cite{J2} and called the {\it iterative logarithm}\/  of $f$ by \'Ecalle \cite{Ecalle2}. Some of the properties of iterative logarithms are discussed in Section~\ref{sec:itlog}, before we return to the proof of the conjecture of Shadrin-Zvonkine in Section~\ref{sec:proof}. The exponential generating function (egf) of the sequence $(c_n)$, that is, the formal power series 
$$\sum_{n\geq 1} c_n\frac{z^n}{n!}=\frac{1}{2}z^2-\frac{1}{12}z^3+\frac{1}{48}z^4-\frac{1}{180}z^5+\cdots,$$ 
turns out to be nothing else than the iterative logarithm of the power series $e^z-1$. 

The iterative logarithm $\itlog(f)$ of any formal power series $f$ satisfies a certain
functional equation found by Jabotinsky \cite{J1}. In the case of $f=e^z-1$, this equation leads to a convolution formula for Stirling numbers (and another formula for the terms of the sequence $(c_n)$):
\begin{multline}\label{eq:C}\tag{C}
c_n=\sum_{\substack{1\leq k<n\\ 1<n_1<\cdots<n_{k-1}<n_k=n}}  \frac{(-1)^{k+1}}{k}{n_2\brace n_1} {n_3\brace n_2}\cdots {n_k\brace n_{k-1}} =  \\
\sum_{\substack{1\leq k<n-1\\ 1<n_1<\cdots<n_{k-1}<n_k=n-1}} \frac{(-1)^{k}}{k+1}{n_2\brace n_1} {n_3\brace n_2}\cdots {n_k\brace n_{k-1}}
\end{multline}
To our knowledge, this formula does not seem to have been noticed before.
(For instance, it does not appear in Gould's collection of combinatorial identities \cite{Gould}.) We give a proof of \eqref{eq:C} in Section~\ref{sec:proof}.

Shadrin and Zvonkine write that the sequence \eqref{eq:mystery} {\it seems to be quite irregular}\/ \cite[p.~224]{SZ}.
This impression can be substantiated as follows. A formal power series $f\in\C[[z]]$ is said to be {\it differentially algebraic}\/  if it satisfies an algebraic differential equation, i.e., an equation
$$P(z,f,f',\dots,f^{(n)})=0$$
where $P$ is a non-zero polynomial in $n+2$ indeterminates with constant complex coefficients.
The coefficient sequence $(f_n)$ of every differentially algebraic power series $f=\sum_{n\geq 0} f_nz^n\in \Q[[z]]$ is regular in the sense that it satisfies a certain kind of (generally non-linear) recurrence relation \cite[pp.~186--194]{Mahler}. A class of differentially algebraic power series which is of particular importance in combinatorial enumeration is the class of {\it $D$-finite}\/ (also called {\it holonomic}\/) power series \cite[Chapter~6]{Stanley-II}. These are the series  whose coefficient sequence satisfies a homogeneous {\it linear}\/ recurrence relation of finite degree with polynomial coefficients. Equivalently  \cite[Proposition~6.4.3]{Stanley-II} a formal power series $f\in\C[[z]]$ is $D$-finite if and only if $f$ satisfies a non-trivial {\it linear}\/ differential equation 
$$a_0f+a_1f'+\cdots+a_nf^{(n)}=0\qquad (a_i\in\C[z],\ a_n\neq 0).$$ 
(This class includes, e.g., all hypergeometric series.)
In Section~\ref{sec:proof} we will see that the egf of $(c_n)$ is not differentially algebraic. 
This is a consequence of a result of Boshernitzan and Rubel, stated without proof in \cite{BR}, which characterizes when the iterative logarithm of a power series satisfies an ADE; in Section~\ref{sec:difftr} below we give a complete proof of this fact.
It is also known  \cite{Baker-58, Lewin} that the egf of $(c_n)$ has radius of convergence $0$.
Indeed, a common generalization of these results holds true: {\it the egf of $(c_n)$ does not satisfy an algebraic differential equation over the ring of convergent power series.}\/ The proof of this fact will be given elsewhere \cite{A}. It seems likely (though we have not investigated this further) that the {\it ordinary}\/ generating function (ogf) 
$$\sum_{n\geq 1} c_nz^n=z^2-\frac{1}{2}z^3+\frac{1}{2}z^4-\frac{2}{3}z^5+\cdots$$ 
of the sequence \eqref{eq:mystery} is also differentially transcendental. (Note, however, that there are examples of sequences of rationals whose egf is differentially transcendental yet whose ogf is differentially algebraic; see \cite[Proposition~6.3~(i)]{LR}.)


\subsubsection*{Notations and conventions}
We let $d$, $m$, $n$, $k$, possibly with decorations, range over $\N=\{0,1,2,\dots\}$. All rings below are assumed to have a unit $1$. Given a ring $R$ we denote by $R^\times$ the group of units of $R$.

\subsubsection*{Acknowledgements}
We thank the anonymous referees whose corrections and suggestions improved the paper.

\section{The Conjecture of  Shadrin and Zvonkine}\label{sec:conj}

\noindent
Before we can formulate this conjecture, we need to fix some notation.
Let $K$ be a commutative ring and let $R=K[[t_0,t_1,\dots]]$ be the ring of powers series in
the pairwise distinct indeterminates $t_0,t_1,\dots$, with coefficients from $K$. We equip $R$ with the $\frak m$-adic topology, where $\frak m$ is the ideal $(t_0,t_1,\dots)$ of $R$.
In this subsection we let $\bi$, $\bj$ range over 
the set of sequences $\bi=(i_0,i_1,\dots)\in\N^\N$ such that $i_n=0$ for all but finitely many $n$. For each $\bi$ we set
$$t^{\bi} := t_0^{i_0}t_1^{i_1} \cdots t_n^{i_n}\cdots\in R.$$
Hence every element $f$ of $R$ can be uniquely written in the form
$$f = \sum_{\bi} f_{\bi}\, t^{\bi}\qquad\text{where $f_{\bi}\in K$ for all $\bi$.}$$
We call an element of $R$ of the form $at^\bi$, where $0\neq a\in K$, a monomial.
We put
$$\dabs{\bi}  := 1i_0+ 2i_1 + 3i_2 + \cdots + (n+1)i_n + \cdots \in\N,$$
and we define a valuation $v$ on $R$ by setting   
$$\val(f):=\min_{f_\bi\neq 0}\,\dabs{\bi}\in\N\text{ for $0\neq f\in R$,} \qquad
\val(0):=\infty>\N.$$ 
Suppose from now on that $K=\Q[z]$ where $z$ is a new indeterminate over $\Q$.
Shadrin and Zvonkine first introduce rational numbers $a_{d,d+k}$  by the equation
\begin{equation}\label{eq:equation for a}
\sum_{b=1}^{d+1}
{d\choose b-1} \frac{(-1)^{d-b+1}}{d!}
\cdot \frac{1}{1-b\psi}= \sum_{k\geq 0} a_{d,d+k}\psi^{d+k}
\end{equation}
in the formal power series ring $\Q[[\psi]]$:
\begin{align*}
\frac{1}{1-\psi} &= 1+\psi+\psi^2+\cdots & &(d=0) \\
-\frac{1}{1-\psi}+\frac{1}{1-2\psi} &= \psi+3\psi^2+7\psi^3+\cdots &&(d=1) \\
\frac{1/2}{1-\psi}-\frac{1}{1-2\psi}+\frac{1/2}{1-3\psi} &= \psi^2+6\psi^3+25\psi^4+\cdots & &(d=2) \\
&\vdots
\end{align*}
Using the numbers $a_{d,d+k}$ (which turn out to be positive integers, see Lemma~\ref{lem:stirling} below) they then define a sequence $(L_k)_{k>0}$ of differential operators on $R$: abbreviating the $K$-derivation $\frac{\partial}{\partial t_n}$ of 
$R$ by $\partial_n$,  set
$$L_k = \sum_{\substack{0\leq r\leq k \\ k_1+\cdots+k_r=k\\ k_1,\dots,k_r>0\\ n_1,\dots,n_r\geq 0}} 
\frac{1}{r!}\, a_{n_1,n_1+k_1}\cdots a_{n_r,n_r+k_r}\, t_{n_1+k_1}\cdots t_{n_r+k_r}\, \partial_{n_1}\cdots\partial_{n_r} \qquad (k>0).$$
Note that the definition of $L_k$ (as a $K$-linear map $R\to R$) makes sense, since for every $\bi$, either
$$t_{n_1+k_1}\cdots t_{n_l+k_r}\, \partial_{n_1}\cdots\partial_{n_r}(t^\bi)$$
is zero or is a monomial which has valuation $\dabs{\bi}+k_1+\cdots+k_r$
and which is divisible by $t_{n_1+k_1}\cdots t_{n_r+k_r}$; moreover,
given $\bj$ there are only finitely many $\bi$ with $\dabs{\bi}<\dabs{\bj}$,
and only finitely many $k_1,\dots,k_r>0$ and $n_1,\dots,n_r\geq 0$
such that $j_{n_1+k_1},\dots,j_{n_r+k_r}>0$.
The first few terms of the sequence $(L_k)$ are
\begin{align*}
L_1 &= \sum_{n_1} a_{n_1,n_1+1} t_{n_1+1}\,\partial_{n_1}, \\
L_2 &= \sum_{n_1} a_{n_1,n_1+2} t_{n_1+2}\,\partial_{n_1} + \frac{1}{2!} \sum_{n_1,n_2} a_{n_1,n_1+1}a_{n_2,n_2+1}\, t_{n_1+1}t_{n_2+1}\partial_{n_1}\partial_{n_2} \\
L_3 &= \sum_{n_1} a_{n_1,n_1+3} t_{n_1+3}\,\partial_{n_1} + \frac{1}{2!} \sum_{n_1,n_2} a_{n_1,n_1+1}a_{n_2,n_2+2}\, t_{n_1+1}t_{n_2+2}\partial_{n_1}\partial_{n_2} + \\
&\qquad \frac{1}{2!} \sum_{n_1,n_2} a_{n_1,n_1+2}a_{n_2,n_2+1}\, t_{n_1+2}t_{n_2+1}\partial_{n_1}\partial_{n_2} +  \\
&\qquad \frac{1}{3!} \sum_{n_1,n_2,n_3} a_{n_1,n_1+1}a_{n_2,n_2+1}a_{n_3,n_3+1}\, t_{n_1+1}t_{n_2+1}t_{n_3+1}\partial_{n_1}\partial_{n_2}\partial_{n_3},
\end{align*}
and in general we have
\begin{equation}\label{eq:Lk general}
L_k=  \sum_{n_1} a_{n_1,n_1+k} t_{n_1+k}\,\partial_{n_1} + \text{higher-order operators} \qquad (k>0).
\end{equation}
To streamline the notation we set $L_0:=\id_R$. The argument above shows that for every $f\in R$ we have $\val(L_k(f))\geq k+\val(f)$, hence  the sequence $(z^kL_k(f))_k$ is summable in $R$. Thus one may
combine the $L_k$ to a $K$-linear map $\mathbf L \colon R\to R$ with
$$\mathbf L(f)=\sum_k z^kL_k(f)=f+zL_1(f)+z^2L_2(f)+\cdots\qquad\text{for all $f\in R$.}$$
The operator $\mathbf L$ is used in \cite{SZ} to perform a change of variables in a certain formula for Hurwitz numbers coming from \cite{ELSV}. The following proposition is established in \cite[Proposition~A.8]{SZ}. (The formula for $l_k$ given in \cite{SZ} mistakenly omits the summation over $n$.)

\begin{prop}\label{prop:SZ}
There are rational numbers $\alpha_{n,n+k}$ such that, setting
$$l_k  = \sum_{n} \alpha_{n,n+k}t_{n+k}\,\partial_n \qquad (k>0)$$
and
$$\mathbf l = z l_1 + z^2 l_2 + \cdots,$$
we have $\mathbf L=\exp(\mathbf l)$, i.e., 
\begin{equation}\label{eq:L=exp(l)}
\mathbf L(f) = \sum_{n} \frac{1}{n!} {\mathbf l}^n(f)\qquad\text{for every $f\in R$.}
\end{equation}
\end{prop}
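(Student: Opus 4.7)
The plan is to identify $\mathbf L$ with a continuous $K$-algebra automorphism of $R$ whose logarithm in the $\val$-adic topology will be the desired derivation $\mathbf l$. Applied to a single variable $t_n$, only the $r=1$ summand of $L_k$ survives, so $\mathbf L(t_n) = t_n + A_n$ where $A_n := \sum_{k>0} z^k a_{n,n+k}\, t_{n+k}$. Let $\sigma$ denote the unique continuous $K$-algebra endomorphism of $R$ with $\sigma(t_n) = t_n + A_n$. The first main step is to check $\mathbf L = \sigma$ by direct combinatorial comparison: expanding $\sigma(t_{m_1}\cdots t_{m_s}) = \prod_i(t_{m_i} + A_{m_i})$ and reading the $z^k$-coefficient amounts to choosing an $r$-subset $I \subseteq \{1,\dots,s\}$ and positive integers $(k_i)_{i\in I}$ with $\sum k_i = k$, producing the monomial $\prod_{i\in I} a_{m_i,m_i+k_i}\, t_{m_i+k_i}\prod_{i\notin I} t_{m_i}$; the $\frac{1}{r!}$ factor in the definition of $L_k$ is precisely what compensates for the $r!$-fold overcounting that comes from labelling the chosen subset by an ordered tuple $(n_1,\dots,n_r)$ via $\partial_{n_1}\cdots\partial_{n_r}$.

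Once $\mathbf L = \sigma$ is established, the estimate $\val(\mathbf L(f) - f) \geq \val(f) + 1$ implies that $\sigma$ is invertible and that the series
$$\mathbf l := \log(\mathbf L) = \sum_{m\geq 1}\frac{(-1)^{m+1}}{m}(\mathbf L - \id)^m$$
converges to a continuous $K$-linear operator on $R$ with $\exp(\mathbf l) = \mathbf L$. That $\mathbf l$ is a $K$-derivation follows from the standard argument: the one-parameter family $\mathbf L^s := \exp(s\mathbf l)$ satisfies $\mathbf L^s\circ\mathbf L^t = \mathbf L^{s+t}$, so each member is a $K$-algebra automorphism by continuity and the fact that $\mathbf L^1 = \mathbf L$ is one; differentiating $\mathbf L^s(fg) = \mathbf L^s(f)\,\mathbf L^s(g)$ at $s=0$ yields the Leibniz rule for $\mathbf l$.

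To pin down the form of $\mathbf l$, I observe that $\mathbf L$ preserves the free $K$-submodule $V := \bigoplus_n K\cdot t_n$ and acts on it by the upper unitriangular matrix $M$ with $M_{n,n+k} = z^k a_{n,n+k}$ for $k > 0$. The $(n,n+k)$-entry of each power $(M-I)^r$ is a composition sum $z^k\sum_{k_1+\cdots+k_r=k}a_{n,n+k_1}a_{n+k_1,n+k_1+k_2}\cdots$, hence equals $z^k$ times a rational number; consequently $(\log M)_{n,n+k} = z^k\alpha_{n,n+k}$ with $\alpha_{n,n+k}\in\Q$. Thus $\mathbf l(t_n) = \sum_{k>0}z^k\alpha_{n,n+k}\,t_{n+k}$, and since a continuous $K$-derivation of $R$ is determined by its values on the $t_n$, we conclude $\mathbf l = \sum_{k>0} z^k l_k$ with $l_k = \sum_n \alpha_{n,n+k}\, t_{n+k}\,\partial_n$, as claimed.

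The main obstacle is the combinatorial identification $\mathbf L = \sigma$; once this is granted, the rest is standard exp/log formalism for continuous $K$-algebra automorphisms of a valued power series ring that are the identity modulo a strict valuation filtration.
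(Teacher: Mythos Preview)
The paper does not give its own proof of this proposition; it simply cites \cite[Proposition~A.8]{SZ}. Your argument supplies an independent proof, and it is correct in substance.

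Your key step---recognizing $\mathbf L$ as the substitution endomorphism $\sigma\colon t_n\mapsto t_n+A_n$---is exactly the formal multivariate Taylor expansion: writing $\sigma(f)=\sum_{\bi}\frac{1}{\bi!}A^{\bi}\,\partial^{\bi}f$ and converting the sum over multi-indices $\bi$ with $\abs{\bi}=r$ into a sum over ordered $r$-tuples $(n_1,\dots,n_r)$ produces the factor $\frac{r!}{\bi!}$, which combines with $\frac{1}{r!}$ in the definition of $L_k$ to give $\frac{1}{\bi!}$. Once $\mathbf L=\sigma$ is known, the remaining exp/log formalism is standard.

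Two small points of precision. First, the module you call $V$ should be the \emph{closed} $K$-span of the $t_n$ (equivalently the submodule $\prod_n Kt_n\subseteq R$), not the algebraic direct sum $\bigoplus_n Kt_n$: already $(\mathbf L-\id)(t_n)=\sum_{k>0}z^ka_{n,n+k}\,t_{n+k}$ is an infinite sum. This does not affect the matrix computation, which only uses that the coefficient of $t_{n+k}$ in $(\mathbf L-\id)^m(t_n)$ is $z^k$ times a rational number. Second, your justification that each $\mathbf L^s=\exp(s\mathbf l)$ is a $K$-algebra automorphism should not read ``by continuity and the fact that $\mathbf L^1$ is one.'' The clean argument is: $\mathbf L^n$ is a $K$-algebra automorphism for every $n\in\Z$ because $\mathbf L$ is, and for fixed $f,g\in R$ every coefficient of $\mathbf L^s(fg)-\mathbf L^s(f)\,\mathbf L^s(g)$ is a polynomial in $s$ (only finitely many terms of the exponential series contribute, by the valuation estimate) that vanishes on $\Z$, hence is identically zero. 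Differentiating at $s=0$ then yields the Leibniz rule for $\mathbf l$.
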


(To see that the definition of $l_k$ and $\mathbf l$ makes sense argue as for $L_k$ and $\mathbf L$ above; since 
$\val(\mathbf l(f))\geq \val(f)+1$ we have
$\val(\mathbf l^n(f))\geq \val(f)+n$ for all $n$, hence the sum on the right-hand side of the equation in \eqref{eq:L=exp(l)} exists in $R$.)

\medskip

After proving this proposition, Shadrin and Zvonkine make the following conjecture about the form of the $\alpha_{n,n+k}$. (Again, we correct a typo in \cite{SZ}: in Conjecture~A.9 replace $t_n\frac{\partial}{\partial t_{n+k}}$ by $t_{n+k}\frac{\partial}{\partial t_n}$.)

\begin{conjecture}
For all $k>0$  and all $n$,
$$\alpha_{n,n+k} = c_{k+1} \binom{n+k+1}{k+1}$$
where $(c_k)_{k\geq 1}$ is a sequence of rational numbers, with the first terms  given by \eqref{eq:mystery}.
\end{conjecture}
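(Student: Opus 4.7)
The plan is to prove the conjecture by restricting the identity $\mathbf L=\exp(\mathbf l)$ to a convenient invariant subspace of $R$ and then recognizing the resulting matrix as the iteration matrix of the power series $e^u-1$. First, I would establish Lemma~\ref{lem:stirling}: expanding $1/(1-b\psi)=\sum_m b^m\psi^m$ in \eqref{eq:equation for a} and invoking the standard formula ${m\brace r}=\frac{1}{r!}\sum_{j=0}^{r}(-1)^{r-j}\binom{r}{j}j^m$ yields $a_{d,d+k}={d+k+1\brace d+1}$. Next, I would observe that the $K$-linear span $V\subseteq R$ of $\{t_n:n\ge 0\}$ is stable (in the appropriate completion) under both $\mathbf L$ and $\mathbf l$, since on a single indeterminate $t_n$ only the $r=1$ summand of $L_k$ (respectively, of $l_k$) contributes. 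In the basis $(t_n)_{n\ge 0}$ these operators are therefore represented by bi-infinite upper triangular matrices $M$ and $A$ with entries $M_{n,n+k}=z^k{n+k+1\brace n+1}$ and $A_{n,n+k}=z^k\alpha_{n,n+k}$, and Proposition~\ref{prop:SZ} forces $M=\exp(A)$.

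The key step is to identify $V$ with the ideal $u\,\Q[z][[u]]\subset\Q[z][[u]]$ via $t_n\leftrightarrow u^{n+1}/(n+1)!$. From the classical Stirling identity $(e^u-1)^{n+1}/(n+1)!=\sum_{m}{m\brace n+1}u^m/m!$ and a direct substitution I obtain
$$\frac{f_z(u)^{n+1}}{(n+1)!}=\sum_{k\ge 0}z^k{n+k+1\brace n+1}\frac{u^{n+k+1}}{(n+k+1)!},\qquad\text{where }f_z(u):=\frac{e^{zu}-1}{z},$$
so $M$ is exactly the matrix of the composition operator $p(u)\mapsto p(f_z(u))$ acting on $V$. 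Since $f_z=u+\tfrac{z}{2}u^2+\tfrac{z^2}{6}u^3+\cdots$ lies in $u+u^2\Q[z][[u]]$, this is the iteration matrix of $f_z$ (over the coefficient ring $\Q[z]$) in the sense of Section~\ref{sec:iteration}.

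By the results of Sections~\ref{sec:lie}--\ref{sec:itlog}, the matrix logarithm $A=\log M$ corresponds to the derivation $\phi_z(u)\partial_u$ on $V$, where $\phi_z:=\itlog(f_z)\in u^2\Q[z][[u]]$. Because $f_z$ is obtained from $f:=e^u-1$ by the conjugation $u\mapsto u/z$, the functoriality of $\itlog$ gives $\phi_z(u)=(1/z)\,\itlog(f)(zu)$; writing $\itlog(f)=\sum_{m\ge 1}c_m u^m/m!$, this reads $\phi_z(u)=\sum_{m\ge 1}c_m z^{m-1}u^m/m!$. A short computation of the derivation on the basis then yields
$$\phi_z(u)\,\partial_u\!\left(\frac{u^{n+1}}{(n+1)!}\right)=\sum_{k\ge 0}z^k\,c_{k+1}\binom{n+k+1}{k+1}\frac{u^{n+k+1}}{(n+k+1)!},$$
and comparison with $A\cdot t_n=\sum_{k>0}z^k\alpha_{n,n+k}t_{n+k}$ gives the conjectured formula $\alpha_{n,n+k}=c_{k+1}\binom{n+k+1}{k+1}$, together with the consistency $c_1=0$ coming from the vanishing of the $k=0$ coefficient in $A$.

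The main obstacle will be to make sure that the machinery of Sections~\ref{sec:triangular}--\ref{sec:itlog} (developed for power series over a field) carries over cleanly to the coefficient ring $\Q[z]$; this should be routine, since $f_z$ has leading coefficient $1$ and so both $\exp$ and $\log$ act as purely combinatorial operations on strictly upper triangular perturbations of the identity. A separate short computation in Section~\ref{sec:proof} will identify the $c_m$ defined here via $\itlog(e^u-1)$ with the sequence \eqref{eq:mystery}, completing the proof of the conjecture.
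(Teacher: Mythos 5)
Your proposal is correct, and it reaches the same conclusion via the same core machinery (Lemma~\ref{lem:stirling}, the restriction to $V=\operatorname{span}(t_n)$, and the iteration-matrix/$\itlog$ theory of Sections~\ref{sec:lie}--\ref{sec:itlog}), but it parameterizes the problem differently from the paper. The paper first compares coefficients of $z^k$ in $\mathbf L=\exp(\mathbf l)$ to extract the $z$-free identity \eqref{eq:add+k}, rewrites it as $S^+=\exp(A)$ with $S,A\in\tr_\Q$, and then applies Theorem~\ref{thm:Schippers} directly to $S=[e^z-1]$ (here $z$ is a dummy variable unrelated to the one in $K=\Q[z]$). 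You instead keep $z$ inside the matrices, observe that $\mathbf L|_V$ is literally the composition operator $p\mapsto p\circ f_z$ with $f_z(u)=(e^{zu}-1)/z$ over $K=\Q[z]$, apply the same Theorem~\ref{thm:Schippers} to $[f_z]$, and at the end use the conjugation identity $\itlog(\sigma^{-1}\!\circ f\circ\sigma)=\tfrac{1}{z}\,\itlog(f)(zu)$ with $\sigma(u)=zu$ to reduce to $\itlog(e^u-1)$. The paper's route is a bit more economical: everything happens over $\Q$ and no conjugation functoriality for $\itlog$ is needed. Your route is more conceptual in that it makes manifest from the outset that $\mathbf L$ is a composition operator (so $S^+=\exp(A)$ is not a formula to be noticed but a structural fact), at the cost of an extra (routine) lemma about how $\itlog$ transforms under conjugation, which the paper never needs and does not state; you should prove it (e.g., from the uniqueness of $f^{[t]}$ in Proposition~\ref{prop:EJ}: $(\sigma^{-1}\!\circ f\circ\sigma)^{[t]}=\sigma^{-1}\!\circ f^{[t]}\!\circ\sigma$, then differentiate at $t=0$). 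Also note the index shift: your $M$ equals $([f_z])^+$, not $[f_z]$, so you implicitly use \eqref{eq:M+} to pass from $\log[f_z]=\langle\itlog f_z\rangle$ to $A=\log M=(\langle\itlog f_z\rangle)^+$; this is fine but should be said.
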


The first step in our proof of this conjecture is to realize is that the $a_{d,d+k}$ are essentially the Stirling numbers of the second kind. We extend the definition of $a_{d,d+k}$ by setting $a_{dd}:=1$ for every $d$.

\begin{lemma}\label{lem:stirling}
For every $d$ and $k$,
$$a_{d,d+k} = {d+k+1 \brace d+1}.$$
\end{lemma}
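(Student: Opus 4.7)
The plan is a direct coefficient extraction followed by a comparison with the standard explicit formula for Stirling numbers of the second kind. First I would expand each $\frac{1}{1-b\psi}$ on the left-hand side of \eqref{eq:equation for a} as a geometric series $\sum_{m\geq 0} b^m\psi^m$. Interchanging the finite sum over $b$ with the infinite sum and reading off the coefficient of $\psi^{d+k}$ (for $k\geq 0$) gives
$$a_{d,d+k}\ =\ \frac{1}{d!}\sum_{b=1}^{d+1}(-1)^{d-b+1}\binom{d}{b-1}b^{d+k}.$$
Consistency of this identification with the fact that the right-hand side of \eqref{eq:equation for a} only involves powers $\psi^{d},\psi^{d+1},\ldots$ is automatic from the finite-difference vanishing $\sum_{b}(-1)^{d-b+1}\binom{d}{b-1}p(b)=0$ whenever $\deg p<d$.

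Next I would invoke the classical explicit formula
$${N\brace M}\ =\ \frac{1}{M!}\sum_{j=0}^{M}(-1)^{M-j}\binom{M}{j}j^{N}$$
(which follows immediately from the recurrence in the introduction by inclusion--exclusion). Specializing this to $N=d+k+1$ and $M=d+1$, and dropping the vanishing $j=0$ contribution, yields
$${d+k+1\brace d+1}\ =\ \frac{1}{(d+1)!}\sum_{j=1}^{d+1}(-1)^{d+1-j}\binom{d+1}{j}j^{d+k+1}.$$
The key manipulation is the elementary identity $\binom{d+1}{j}=\tfrac{d+1}{j}\binom{d}{j-1}$, valid for $j\geq 1$; applying it absorbs exactly one factor of $j$ from $j^{d+k+1}$ and cancels the $(d+1)$ in the factorial, giving
$${d+k+1\brace d+1}\ =\ \frac{1}{d!}\sum_{j=1}^{d+1}(-1)^{d+1-j}\binom{d}{j-1}j^{d+k}.$$
Renaming the summation variable $j\mapsto b$ makes this identical to the expression for $a_{d,d+k}$ obtained in the first paragraph.

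There is no conceptual obstacle here: the argument is essentially two lines of elementary symbol-pushing. The only points requiring a bit of care are the indexing conventions (in particular the shift $b=j+1$ and the range of summation) and the consistency of the extended definition $a_{dd}:=1$ with the formula, which holds since the above identification at $k=0$ gives $a_{dd}={d+1\brace d+1}=1$ (every partition of a $(d{+}1)$-element set into $d+1$ blocks is the partition into singletons).
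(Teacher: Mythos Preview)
Your proof is correct, but the route differs from the paper's. Both begin by expanding the geometric series to obtain the coefficient of $\psi^i$ as $\frac{1}{d!}\sum_{b=1}^{d+1}(-1)^{d-b+1}\binom{d}{b-1}b^{i}$. From here the paper shifts the summation index to produce $(b+1)^i$, expands this via the Binomial Theorem, recognizes the inner sum as the explicit formula for ${j\brace d}$, and then invokes a second Stirling identity $\sum_{j=0}^{i}\binom{i}{j}{j\brace d}={i+1\brace d+1}$ (identity (6.15) in Graham--Knuth--Patashnik). You instead start from the explicit formula for ${d+k+1\brace d+1}$ and reduce it to the coefficient expression by the single binomial identity $\binom{d+1}{j}=\frac{d+1}{j}\binom{d}{j-1}$, which absorbs one power of $j$ and one factor of $d+1$. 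Your argument is slightly more economical, requiring only one Stirling-number identity rather than two; the paper's detour through the sum $\sum_j\binom{i}{j}{j\brace d}$ has the minor compensating advantage that the vanishing of the coefficients of $\psi^i$ for $i<d$ falls out automatically (since ${i+1\brace d+1}=0$ there), whereas you check this separately via finite differences.
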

\begin{proof}
We expand the left-hand side of \eqref{eq:equation for a} in powers of $\psi$:
$$
\sum_{b=1}^{d+1}
{d\choose b-1} \frac{(-1)^{d-b+1}}{d!}
\cdot \frac{1}{1-b\psi} = 
  \sum_{i\geq 0} \left( \frac{1}{d!} \sum_{b=1}^{d+1} (-1)^{d-b+1} \binom{d}{b-1} b^i \right) \psi^i.
$$
Now we focus on the coefficient of $\psi^i$ in the last sum. By the Binomial Theorem, this coefficient can be written as
$$\frac{1}{d!} \sum_{b=0}^d (-1)^{d-b} \binom{d}{b} (b+1)^i = 
\sum_{j=0}^i \binom{i}{j} \left( \frac{1}{d!} \sum_{b=0}^d (-1)^{d-b} \binom{d}{b} b^j\right).$$
It is well-known that 
$${j\brace d} = \frac{1}{d!} \sum_{b=0}^d (-1)^{d-b} \binom{d}{b} b^j$$
and
$$\sum_{j=0}^i \binom{i}{j} {j\brace d} = {i+1\brace d+1}.$$
(See, e.g., identities (6.19) respectively (6.15) in \cite{GKP}.) The lemma follows.
\end{proof}

By \eqref{eq:Lk general} and the above lemma  we therefore have 
$$L_k(t_d) = a_{d,d+k}t_{d+k}={d+k+1\brace d+1}t_{d+k} $$ 
and hence
\begin{equation}\label{eq:Ltn}
\mathbf L(t_d) = \sum_k {d+k+1\brace d+1}  z^k t_{d+k}.
\end{equation}
Moreover, by definition of $l_k$ we have $l_k(t_d)=\alpha_{d,d+k}t_{d+k}$ for all $d$ and $k>0$, hence
$$l(t_d) = \sum_{k>0} \alpha_{d,d+k} z^k t_{d+k}$$
and thus for every $n>0$:
$$l^n(t_d) =  \sum_{k_1,\dots,k_n>0}  \alpha_{d,d+k_1}\cdots \alpha_{d+k_1+\cdots+k_{n-1},d+k_1+\cdots+k_n} z^{k_1+\cdots+k_n} t_{d+k_1+\cdots+k_n}.
$$
This yields
$$\exp(\mathbf l)(t_d)=\sum_k \left(
\sum_{\substack{k_1+\cdots+k_n=k\\ n > 0,\ k_1,\dots,k_n>0}} \frac{1}{n!}\alpha_{d,d+k_1}\cdots \alpha_{d+k_1+\cdots+k_{n-1},d+k}\right) z^k t_{d+k}$$
and therefore, by \eqref{eq:Ltn} and Proposition~\ref{prop:SZ}:
\begin{equation}\label{eq:add+k}
{d+k+1\brace d+1} = \sum_{\substack{k_1+\cdots+k_n=k\\ n>0,k_1,\dots,k_n>0}} \frac{1}{n!}\alpha_{d,d+k_1}\alpha_{d+k_1,d+k_1+k_2}\cdots \alpha_{d+k_1+\cdots+k_{n-1},d+k}.
\end{equation}
It is suggestive to express this equation as an identity between matrices.
We define ${j\brace i}:=0$ for $i>j$, and
combine the Stirling numbers of the second kind into a bi-infinite upper triangular matrix:
\begin{equation}\label{eq:Stirling}
S=(S_{ij})=
\begin{pmatrix}
1 & 0 & 0  & 0  & 0  & 0   & \cdots \\
  & 1 &  1 & 1  & 1  & 1   & \cdots \\
  &   & 1  & 3  & 7  & 15  & \cdots \\
  &   &    & 1  &  6 & 25  & \cdots \\
  &   &    &    &  1 &  10 & \cdots \\
  &   &    &    &    & 1   & \cdots \\
  &   &    &    &    &     & \ddots  
\end{pmatrix} \qquad\text{where $S_{ij}={j\brace i}$.}
\end{equation}
We also introduce the upper triangular matrix
$$A=(\alpha_{ij})=
\begin{pmatrix}
0 & 1 & -\frac{1}{2}	& \frac{1}{2}  	& -\frac{2}{3}	& \frac{11}{12}	& \cdots \\
  & 0 & 3 			& -2  			& \frac{5}{2}  	& -4   			& \cdots \\
  &   & 0  			& 6  			& -5  			& \frac{15}{2}	& \cdots \\
  &   &    			& 0  			&  10 			& 10  			& \cdots \\
  &   &    			&    			&  0 			& -15 			& \cdots \\
  &   &    			&    			&    			& 0   			& \cdots \\
  &   &    			&    			&    			&     			& \ddots  
\end{pmatrix}\qquad\text{where $\alpha_{ij}:=0$ for $i\geq j$.}$$ 
Then \eqref{eq:add+k} may be written as
\begin{alignat*}{2}
S_{i+1,j+1} & =  \sum_{\substack{k_1+\cdots+k_n=j-i\\ n>0,\ k_1,\dots,k_n>0}} \frac{1}{n!}\,\alpha_{i,i+k_1}\alpha_{i+k_1,i+k_1+k_2}\cdots \alpha_{i+k_1+\cdots+k_{n-1},j} \\
			& = \sum_{n=1}^{j-i} \frac{1}{n!}\, (A^n)_{ij} & \quad (i\leq j)
\end{alignat*}
or equivalently, writing $S^+:=(S_{i+1,j+1})_{i,j}$ and employing the matrix exponential:
$$S^+ = \sum_{n\geq 0} \frac{1}{n!} A^n = \exp(A)$$
Therefore, in order to prove the conjecture from \cite{SZ}, we need to be able to express the matrix logarithm of $S^+$ in some explicit manner. We show how this can be done (and finish the proof of the conjecture) in Section~\ref{sec:proof} below; before that, we need to step back and first embark on a systematic study of a class of matrices (iteration matrices) which encompasses $S$ and many other matrices of combinatorial significance (Sections~\ref{sec:triangular} and \ref{sec:iteration}), and of their matrix logarithms (Sections~\ref{sec:lie} and \ref{sec:itlog}).

\section{Triangular Matrices}\label{sec:triangular}

\noindent
In this section we let $K$ be a commutative ring. 

\subsection*{The $K$-algebra of triangular matrices}
We construe $K^{\N\times\N}$ as a $K$-module with the componentwise addition and scalar multiplication. The elements $M=(M_{ij})_{i,j\in\N}$ of $K^{\N\times\N}$ may be visualized as
bi-infinite matrices with entries in $K$:
$$M = \begin{pmatrix}
M_{00} & M_{01} & M_{02} &  \cdots &\\
M_{10} & M_{11} & M_{12} &  \cdots &\\
M_{20} & M_{21} & M_{22} &  \cdots &\\
\vdots & \vdots & \vdots &  \ddots &
\end{pmatrix}.$$
We say that $M=(M_{ij})\in K^{\N\times\N}$ is (upper) {\bf triangular} if $M_{ij}=0$ for all $i,j\in\N$ with $i>j$. We usually write a triangular matrix $M$ in the form
$$M = 
\begin{pmatrix}
M_{00} 	& M_{01} 	& M_{02} 	& M_{03}  & \cdots  \\
  	& M_{11}				& M_{12}	& M_{13} & \cdots  \\
	&				& M_{22}			& M_{23} & \cdots  \\
	&				&				& M_{33}			  & \cdots  \\
	&				&				&			& \ddots
\end{pmatrix}.$$
Given triangular matrices
$M=(M_{ij})$ and $\tilde{M}=(\tilde{M}_{ij})$, the product 
$$M\cdot\tilde{M} := \left(\textstyle\sum_{k} M_{ik}\tilde{M}_{kj}\right)_{i,j\in\N}$$
makes sense and is again a triangular matrix.
Equipped with this operation, the $K$-submodule of $K^{\N\times\N}$ consisting of all triangular matrices becomes an associative $K$-algebra $\tr_K$ with unit $1$ given by the identity matrix. If $K$ is a subring of a commutative ring $L$, then $\tr_K$ is a $K$-subalgebra of the $K$-algebra $\tr_L$. 
We also define 
$$[M,N] := MN-NM \qquad\text{for $M,N\in\tr_K$}.$$
Then the $K$-module $\tr_K$ equipped with the binary operation $[\hskip0.5em , \hskip0.5em ]$ is a Lie $K$-algebra.

\medskip

For every $n$ we set
$$\tr_K^n := \big\{ M=(M_{ij})\in\tr_K : \text{$M_{ij}=0$ for all $i,j\in\N$ with $i-j+n\geq 1$}\big\}.$$
We call the elements of $\tr^1_K$ {\bf strictly triangular.} 
It is easy to verify that the sequence $(\tr^n_K)$ of $K$-submodules of $\tr_K$ is a filtration of the $K$-algebra $\tr_K$, i.e.,
\begin{enumerate}
\item $\tr_K^0=\tr_K$;
\item $\tr_K^n\supseteq \tr_K^{n+1}$ for all $n$; 
\item $\tr_K^m \tr_K^n\subseteq \tr_K^{m+n}$ for all $m$, $n$; and
\item $\bigcap_{n} \tr_K^n=\{0\}$.
\end{enumerate}
Clearly $\tr_K$ is complete in the topology making $\tr_K$ into a topological ring with fundamental system of neighborhoods of $0$ given by the $\tr_K^n$.

\medskip

The group $\tr_K^\times$ of units of $\tr_K$ has the form
$$\tr_K^\times = D_K\ltimes (1+\tr_K^1) \qquad\text{(internal semidirect product of subgroups of $\tr_K^\times$)}$$
where $D_K$ is the group of diagonal invertible matrices:
$$D_K := \big\{ M=(M_{ij})\in \tr_K : 
\text{$M_{ii}\in K^\times$ and $M_{ij}=0$ for $i\neq j$}\big\}.$$

\subsection*{Diagonals}
We say that a matrix $M=(M_{ij})\in\tr_K$ is {\bf $n$-diagonal} if $M_{ij}=0$ for $j\neq i+n$. We simply call $M$ {\bf diagonal} if $M$ is $0$-diagonal.
Given a sequence $a=(a_i)_{i\geq 0}\in K^\N$, we denote by $\diag_n a$ the $n$-diagonal matrix $M=(M_{ij})\in K^{\N\times\N}$ with 
$M_{i,i+n}=a_i$ for every $i$.  
The sum of two $n$-diagonal matrices is $n$-diagonal. As for products, we have:

\begin{lemma}\label{lem:formulas for diagonals}
Let $M=\diag_m a$ be $m$-diagonal and $N=\diag_n b$ be $n$-diagonal, where $a=(a_i), b=(b_i)\in K^\N$. Then $M\cdot N$ is $(m+n)$-diagonal, in fact
$$M\cdot N = \diag_{m+n}( a_i\cdot b_{i+m} )_{i\geq 0}.$$
Therefore $[M,N]$ is $(m+n)$-diagonal, with
$$[M,N] = \diag_{m+n}( a_i\cdot b_{i+m} - b_i\cdot a_{i+n} )_{i\geq 0},$$
and for each $k$, the matrix $M^k$ is $km$-diagonal, with
$$M^k = \diag_{km} (a_i \cdot a_{i+m} \cdots a_{i+(k-1)m} )_{i\geq 0}.$$
\end{lemma}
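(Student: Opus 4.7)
The plan is to verify the three assertions by direct computation from the definition of matrix multiplication, starting with the product formula; the commutator formula will follow by symmetry, and the power formula will follow by an easy induction.

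First I would compute $(M\cdot N)_{ij}=\sum_{k} M_{ik}N_{kj}$. Since $M=\diag_m a$, the factor $M_{ik}$ is zero unless $k=i+m$, in which case $M_{i,i+m}=a_i$. Similarly $N_{i+m,j}$ is zero unless $j=(i+m)+n$, in which case it equals $b_{i+m}$. Hence exactly one term of the sum survives, and $(MN)_{ij}=a_ib_{i+m}$ when $j=i+m+n$, and $0$ otherwise. This is precisely the claim that $MN=\diag_{m+n}(a_i\cdot b_{i+m})_{i\geq 0}$.

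The formula for $[M,N]$ is then immediate: applying the formula just proved with the roles of $(M,a,m)$ and $(N,b,n)$ interchanged yields $NM=\diag_{m+n}(b_i\cdot a_{i+n})_{i\geq 0}$, and subtracting gives
$$[M,N]=MN-NM=\diag_{m+n}(a_i\cdot b_{i+m}-b_i\cdot a_{i+n})_{i\geq 0},$$
which is $(m+n)$-diagonal as asserted.

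For the power formula I would argue by induction on $k$. The cases $k=0$ (the identity is $0$-diagonal with all entries $1$, matching the empty product) and $k=1$ are trivial. For the inductive step, assume $M^k=\diag_{km}(a_i\cdot a_{i+m}\cdots a_{i+(k-1)m})_{i\geq 0}$. Applying the product formula to $M^{k+1}=M^k\cdot M$, with the first factor $km$-diagonal and the second $m$-diagonal, the resulting $(k+1)m$-diagonal entries are
$$\bigl(a_i\cdot a_{i+m}\cdots a_{i+(k-1)m}\bigr)\cdot a_{i+km},$$
which is the desired expression. There is no real obstacle here: everything reduces to the observation that multiplying diagonal matrices only ever has one surviving summand, so the main point is just to be careful about the index shifts in the subscripts of $a$ and $b$.
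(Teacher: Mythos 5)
Your proof is correct and entirely routine; the paper in fact omits a proof of this lemma (treating it as an immediate verification), and your direct computation from the definition of matrix multiplication, together with the swap-and-subtract for the commutator and the induction for powers, is exactly the argument one would supply.
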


\subsection*{Exponential and logarithm of triangular matrices}
In this subsection we assume that $K$ contains $\Q$ as a subring. Then
for each strictly triangular matrix $M$, the sequences 
$\big(\frac{M^n}{n!}\big)_{n\geq 0}$ and
$\big((-1)^{n+1}\frac{M^n}{n} \big)_{n\geq 1}$ 
are summable, and the maps
$$\tr_K^1\to 1+\tr_K^1\colon M\mapsto \exp(M) := \sum_{n\geq 0} \frac{M^n}{n!}$$
and
$$1+\tr_K^1 \to \tr_K^1\colon M\mapsto \log(M) := \sum_{n\geq 1} (-1)^{n+1}\frac{(M-1)^n}{n}$$
are mutual inverse; in particular, they are bijective. If $M\in\tr_K^n$, $n>0$, then $\exp(M)\in 1+\tr_K^{n}$ and $\log(1+M)\in\tr^n_K$.
It is easy to see that 
\begin{equation}\label{eq:exp for commuting matrices}
\exp(M)\exp(N)=\exp(M+N)\qquad\text{for all $M,N\in \tr_K^1$ with $MN=NM$.}
\end{equation}
In particular
$$\exp(M)^k=\exp(kM)\qquad\text{for all $M\in\tr_K^1$, $k\in\Z$.}$$
We also note that given a unit $U$ of $\tr_K$, we have
$$
\exp(UMU^{-1})=U\exp(M)U^{-1}\qquad\text{for all $M\in\tr_K^1$}
$$
and
\begin{equation}\label{eq:conjugation logarithm}
\log\big(UMU^{-1}\big)=U\log(M)U^{-1}\qquad\text{for all $M\in 1+\tr_K^1$.}
\end{equation}
Given $M=(M_{ij})_{i,j}\in\tr_K$ we define $M^+:=(M_{i+1,j+1})_{i,j}\in\tr_K$. It is easy to see that $M\mapsto M^+$ is a $K$-algebra morphism $\tr_K\to\tr_K$ with $M\in\tr_K^n\Rightarrow M^+\in\tr_K^n$. Thus, for $M\in\tr_K^1$:
\begin{equation}\label{eq:M+}
\exp(M^+)=\exp(M)^+,\qquad \log(1+M^+)=\log(1+M)^+.
\end{equation}
From Lemma~\ref{lem:formulas for diagonals} we immediately obtain, for all $M=\diag_1 a$ where $a=(a_i)\in K^\N$: 
\begin{equation}\label{eq:exp of diag1}
(\exp M)_{ij} = \frac{1}{(j-i)!}\, a_i\cdot a_{i+1} \cdots a_{j-1} \qquad\text{for all $i,j\in\N$ with $i\leq j$.}
\end{equation}

\subsection*{Derivations on the $K$-algebra of triangular matrices}
Let $\der$ be a derivation of $K$, i.e., a map $\der\colon K\to K$ such that
$$\der(a+b)=\der(a)+\der(b),\quad \der(ab)=\der(a)b+a\der(b)\qquad\text{for all $a,b\in K$.}$$
Given $M=(M_{ij})\in\tr_K$ we let
$$\der(M):=(\der(M_{ij}))\in\tr_K.$$
Then $M\mapsto\der(M)\colon\tr_K\to\tr_K$ is a derivation of $\tr_K$, i.e.,
$$\der(M+N)=\der(M)+\der(N),\quad \der(MN)=\der(M)N+M\der(N)\qquad\text{for all $M,N\in\tr_K$.}$$ 
Note that $\der(\tr^n_K)\subseteq\tr^n_K$ for every $n$.

\medskip

We now let $t$ be an indeterminate over $K$, and we work in the polynomial ring $K^*=K[t]$ and in the $K^*$-algebra $\tr_{K^*}$ (which contains $\tr_K$ as a $K$-subalgebra). We equip $K^*$ with the derivation $\frac{d}{dt}$.
The following two elementary observations are used in Section~\ref{sec:lie}. 
Until the end of this subsection we assume that $K$ contains $\Q$ as a subring.

\begin{lemma}\label{lem:derivative of exp(tM)}
Let $M\in\tr^1_K$. Then
$$\frac{d}{dt} \exp(tM) = \exp(tM)M.$$
\end{lemma}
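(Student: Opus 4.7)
The plan is to reduce the identity to an entrywise statement in which each entry of $\exp(tM)$ is a polynomial in $t$, so that the ``differentiation of a series'' becomes genuine polynomial differentiation. Since $M\in\tr_K^1$, property (3) of the filtration $(\tr^n_K)$ yields $M^n\in\tr_K^n$ for every $n$; hence $(M^n)_{ij}=0$ whenever $n>j-i$. Consequently, for each fixed pair $(i,j)$ with $i\leq j$, the series defining $\exp(tM)$ collapses at the $(i,j)$-entry to a finite sum
\[
[\exp(tM)]_{ij}\ =\ \sum_{n=0}^{j-i}\frac{t^n}{n!}\,(M^n)_{ij}\ \in\ K[t],
\]
and all sub-diagonal entries vanish identically.

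Next I differentiate this polynomial coefficient-wise in $K[t]$ and re-index the sum:
\[
\frac{d}{dt}[\exp(tM)]_{ij}\ =\ \sum_{n\geq 1}\frac{t^{n-1}}{(n-1)!}(M^n)_{ij}\ =\ \sum_{m\geq 0}\frac{t^m}{m!}(M^{m+1})_{ij}.
\]
Writing $M^{m+1}=M^m\cdot M$ and using that the product $(M^m\cdot M)_{ij}=\sum_k (M^m)_{ik}M_{kj}$ is a finite sum (only indices $i\le k\le j$ contribute, by strict triangularity), I can pull the right-multiplication by $M$ outside the summation over $m$:
\[
\frac{d}{dt}[\exp(tM)]_{ij}\ =\ \Bigl(\sum_{m\geq 0}\frac{t^m M^m}{m!}\cdot M\Bigr)_{ij}\ =\ \bigl(\exp(tM)\,M\bigr)_{ij}.
\]
Since this holds for every pair $(i,j)$, the claim follows in $\tr_{K^*}$.

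There is no genuine obstacle; the only point requiring care is justifying term-by-term differentiation of the exponential series, and this is settled immediately by the observation that strict triangularity of $M$ makes each matrix entry a polynomial in $t$ of degree at most $j-i$, reducing the question to ordinary differentiation in $K[t]$.
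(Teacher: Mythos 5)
Your proof is correct and takes essentially the same approach as the paper: expand $\exp(tM)$ as $\sum_n t^n M^n/n!$, differentiate term by term, re-index, and factor out the right multiplication by $M$. The only difference is that you explicitly justify term-by-term differentiation by noting that strict triangularity makes each entry $[\exp(tM)]_{ij}$ a polynomial of degree at most $j-i$ in $t$, a point the paper's proof leaves implicit.
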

\begin{proof}
We have $(tM)^n=t^nM^n$ for every $n$, hence
$$\exp(tM)=\sum_{n\geq 0} \frac{(tM)^n}{n!}=\sum_{n\geq 0} \frac{t^n M^n}{n!}$$
and thus
$$\frac{d}{dt}\exp(tM) = \sum_{n\geq 0} \frac{d}{dt}\left(\frac{t^n M^n}{n!}\right) = \sum_{n>0} \frac{t^{n-1}M^n}{(n-1)!} = \exp(tM)M.$$
(Similarly, of course, one also sees $\frac{d}{dt} \exp(tM) = M\exp(tM)$, but we won't need this fact.)
\end{proof}

The following lemma is a familiar fact about homogeneous systems of linear differential equations with constant coefficients:

\begin{lemma}\label{lem:linear DE}
Let $M,Y_0\in\tr^1_K$ and $Y\in\tr^1_{K^*}$. Then
$$\frac{dY}{dt}=YM\text{ and } Y\big\lvert_{t=0} = Y_0 \qquad\Longleftrightarrow\qquad Y=Y_0\exp(tM).$$
\end{lemma}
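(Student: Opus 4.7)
My plan is to prove the two directions separately. The implication $(\Leftarrow)$ is essentially a one-line check: if $Y=Y_0\exp(tM)$, then treating the constant factor $Y_0$ as independent of $t$ and applying Lemma~\ref{lem:derivative of exp(tM)}, the Leibniz rule gives $\frac{dY}{dt}=Y_0\cdot\exp(tM)\,M=YM$, while setting $t=0$ yields $Y_0\cdot 1=Y_0$.

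For $(\Rightarrow)$ I would use the standard trick of multiplying by the inverse of the putative solution. Introduce $Z:=Y\exp(-tM)\in\tr^1_{K^*}$. Applying Lemma~\ref{lem:derivative of exp(tM)} with $M$ replaced by $-M\in\tr^1_K$ gives $\frac{d}{dt}\exp(-tM)=-\exp(-tM)\,M$; since $M$ commutes with every partial sum of $\exp(-tM)$ it commutes with $\exp(-tM)$ itself, so this equals $-M\exp(-tM)$ as well. Hence by the product rule
$$\frac{dZ}{dt}=\frac{dY}{dt}\exp(-tM)+Y\frac{d}{dt}\exp(-tM)=YM\exp(-tM)-YM\exp(-tM)=0.$$
Entry by entry, this says every $Z_{ij}\in K[t]$ has vanishing derivative, so (since $\Q\subseteq K$) every $Z_{ij}$ lies in $K$, i.e.\ $Z\in\tr^1_K$. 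Evaluating at $t=0$ then yields $Z=Y_0\cdot 1=Y_0$, so $Y\exp(-tM)=Y_0$. By \eqref{eq:exp for commuting matrices} applied to the commuting matrices $tM$ and $-tM$, the element $\exp(-tM)$ is invertible in $1+\tr^1_{K^*}$ with inverse $\exp(tM)$; multiplying $Y\exp(-tM)=Y_0$ on the right by $\exp(tM)$ therefore delivers $Y=Y_0\exp(tM)$, as required.

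I do not expect any genuine obstacle. The only points warranting a moment's care are (i) working consistently in the ring $\tr^1_{K^*}$ and recognising that $Y_0\in\tr^1_K$ is naturally viewed inside $\tr^1_{K^*}$, (ii) justifying $\exp(-tM)M=M\exp(-tM)$ (formal commutativity of $M$ with its own power series), and (iii) deducing from the vanishing of each entry's derivative that the entry is an element of $K$, which uses nothing more than $\Q\subseteq K$.
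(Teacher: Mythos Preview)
Your proof is correct, but it takes a different route from the paper's. For the forward direction, the paper reduces to the case $Y_0=0$ by passing to $Y_1:=Y-Y_0\exp(tM)$, and then shows directly that any $Y\in\tr^1_{K^*}$ satisfying $\frac{dY}{dt}=YM$ and $Y|_{t=0}=0$ must vanish, by a minimal-order-of-vanishing argument on the entries: writing each nonzero $Y_{ij}$ as $t^{n_{ij}}Z_{ij}$ with $Z_{ij}(0)\neq 0$ and choosing $(i,j)$ with $n_{ij}$ minimal leads to a contradiction. Your argument instead uses the integrating-factor trick: set $Z:=Y\exp(-tM)$, show $\frac{dZ}{dt}=0$ via the product rule and the commutation of $M$ with its own exponential, and conclude that $Z$ has constant entries (using $\Q\subseteq K$), hence $Z=Y_0$.

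Your approach is shorter and more conceptual; it is the natural argument once one knows that $\exp(-tM)$ exists and inverts $\exp(tM)$. The paper's argument is more self-contained in that it uses nothing about $\exp$ beyond what is already needed for the $(\Leftarrow)$ direction, and in particular avoids the (admittedly easy) verification that $M$ commutes with $\exp(-tM)$. Both work equally well here.
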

\begin{proof}
Lemma~\ref{lem:derivative of exp(tM)} shows that if $Y=Y_0\exp(tM)$ then $\frac{dY}{dt}=YM$, and clearly
$Y\big\lvert_{t=0}=Y_0\exp(0)=Y_0$. Conversely, suppose $\frac{dY}{dt}=YM$ and $Y\big\lvert_{t=0} = Y_0$. Then $Y_1:=Y-Y_0\exp(tM)\in\tr^1_{K^*}$ satisfies $\frac{dY_1}{dt}=Y_1M$ and $Y_1\big\lvert_{t=0} = 0$; hence after replacing $Y$ by $Y_1$ we may assume that $\frac{dY}{dt}=YM$ and $Y\big\lvert_{t=0} = 0$, and need to show that then $Y=0$. For a contradiction suppose $Y\neq 0$, and
write $Y=(Y_{ij})$ where $Y_{ij}\in K^*$ and $M=(M_{ij})$ where $M_{ij}\in K$. Since  $Y\big\lvert_{t=0} = 0$, for each $i$, $j$ such that $Y_{ij}\neq 0$ we can write
$Y_{ij}=t^{n_{ij}}Z_{ij}$ with $n_{ij}\in\N$, $n_{ij}>0$, and $Z_{ij}\in K^*$, $Z_{ij}(0)\neq 0$.
Choose $i$, $j$ so that $n_{ij}$ is minimal. Then by $\frac{dY}{dt}=YM$ we have
$$n_{ij}t^{n_{ij}-1}Z_{ij}+t^{n_{ij}}\frac{dZ_{ij}}{dt}=\frac{dY_{ij}}{dt} = \sum_k Y_{ik} M_{kj} = \sum_{Y_{ik}\neq 0} t^{n_{ik}} Z_{ik} M_{kj}, $$
thus
$$Z_{ij}=\frac{1}{n_{ij}} \left( -t\frac{dZ_{ij}}{dt} + \sum_{Y_{ik}\neq 0} t^{n_{ik}-n_{ij}+1} Z_{ik} M_{kj} \right)$$
and hence $Z_{ij}(0)=0$, a contradiction. So $Y=0$ as desired.
\end{proof}

\section{Iteration Matrices}\label{sec:iteration}

\noindent
Let $K$ be a commutative ring containing $\Q$ as a subring. 
Let $A=\Q[y_1,y_2,\dots]$ where $(y_n)_{n\geq 1}$ is a sequence of pairwise distinct indeterminates, let $z$ be an indeterminate distinct from each $y_n$, and let 
$$y = \sum_{n\geq 1} y_n\frac{z^n}{n!}\in A[[z]].$$
Then, with $x$ another new indeterminate, we have in the power series ring $A[[x,z]]$:
\begin{equation}\label{eq:Bell polynomials}
\exp(x\cdot y) = \sum_{n\geq 0} \frac{(x\cdot y)^n}{n!} = \sum_{i,j\in\N} B_{ij} x^{i} \frac{z^j}{j!}
\end{equation}
where $B_{ij}=B_{ij}(y_1,y_2,\dots)$ are polynomials in $\Q[y_1,y_2,\dots]$, known as the {\bf Bell polynomials.} A general reference for properties of the $B_{ij}$ is Comtet's book \cite{Comtet-Book}.
(Our notation slightly differs from the one used in \cite{Comtet-Book}: $B_{ij}=\text{\bf B}_{ji}$.) We can obtain $B_{ij}$ by differentiating \eqref{eq:Bell polynomials} appropriately and setting $x=z=0$:
$$B_{ij}=\frac{1}{i!} \frac{\partial^i\partial^j}{\partial x^i\partial z^j}\exp(x\cdot y)\bigg\lvert_{x=z=0} 
        =\frac{1}{i!} \frac{d^j}{dz^j} y^i \bigg\lvert_{z=0},$$
hence
$$
\frac{1}{i!} y^i = \sum_{j\geq 0} B_{ij} \frac{z^j}{j!}.
$$
In particular, we immediately see that $B_{0j}=0$  and
$B_{1j}=y_j$ for $j\geq 1$. Since 
$$\frac{1}{i!} y^i = y_1^i\frac{z^i}{i!}+\text{terms of higher degree (in $z$)}$$
we also see that $B_{ij}=0$ whenever $i>j$ and $B_{jj}=y_1^j$ for all $j$.
It may also be shown (see \cite[Section~3.3, Theorem~A]{Comtet-Book}) that $B_{ij}\in\Z[y_1,\dots,y_{j-i+1}]$, and $B_{ij}$ is homogeneous of degree $i$ and isobaric of weight $j$. (Here each $y_j$ is assigned weight~$j$.)
Given a power series $f\in z K[[z]]$, written in the form
$$f=\sum_{n\geq 1} f_n\frac{z^n}{n!}\qquad\text{($f_n\in K$ for each $n\geq 1$),}$$
we now define the triangular matrix
\begin{multline*}
[f] := \big([f]_{ij}\big)_{i,j\in\N}=\big(B_{ij}(f_1,f_2,\dots,f_{j-i+1})\big)_{i,j\in\N}= \\ 
\begin{pmatrix}
1 & 0   & 0     & 0   		& 0   			& 0   					& \cdots \\
  & f_1 & f_2   & f_3 		& f_4 			& f_5					& \cdots \\
  &     & f_1^2 & 3f_1f_2 	& 4f_1f_3+3f_2^4	& 5f_1f_4+10f_2f_3		& \cdots \\
  &     &       & f_1^3      & 6f_1^2f_2		& 10f_1^2f_3+15f_1f_2^2	& \cdots \\
  &     &       &            & f_1^4         & 10f_1^3f_2				& \cdots \\
  &		&		&			&				& f_1^5					& \cdots \\
  &		&		&			&				&						&\ddots
\end{pmatrix}\in\tr_K.
\end{multline*}
More generally, suppose $\Omega=(\Omega_n)$ is a {\bf reference sequence,} i.e., a sequence of non-zero rational numbers with $\Omega_0=\Omega_1=1$. Then we define the Bell polynomials with respect to $\Omega$ by setting
$$y = \sum_{n\geq 1} y_n\Omega_n\,z^n\in A[[z]]$$
and expanding
\begin{equation}\label{eq:power of y}
\Omega_i y^i = \sum_{j\geq 0} B^\Omega_{ij} \Omega_j\, z^j
\end{equation}
where $B^\Omega_{ij}=B^\Omega_{ij}(y_1,y_2,\dots)\in\Q[y_1,y_2,\dots]$.
As above, one sees that  $B^\Omega_{0j}=0$  and
$B^\Omega_{1j}=y_j$ for $j\geq 1$, as well as $B^\Omega_{ij}=0$ whenever $i>j$ and $B^\Omega_{jj}=y_1^j$ for all $j$.
For
$$f=\sum_{n\geq 1} f_n\,\Omega_n z^n\in zK[[z]]\qquad\text{($f_n\in K$ for each $n\geq 1$),}$$
we define
$$[f]^\Omega := \big([f]_{ij}^\Omega\big)_{i,j\in\N}\in\tr_K\qquad\text{where 
$[f]^\Omega_{ij}:=B^\Omega_{ij}(f_1,f_2,\dots,f_{j-i+1})$.}$$
Thus, denoting the reference sequence $(1/n!)$ by $\Phi$,
we have $B^\Phi_{ij}=B_{ij}$ for each $i$, $j$ and $[f]^\Phi=[f]$ for each $f\in zK[[z]]$.
Note that by \eqref{eq:power of y} we have, for all reference sequences $\Omega$, $\tilde\Omega$:
\begin{equation}\label{eq:convert iteration matrices}
\frac{\Omega_j}{\Omega_i}\,[f]^\Omega_{ij}=\frac{\tilde{\Omega}_j}{\tilde{\Omega}_i}\,[f]^{\tilde\Omega}_{ij}\qquad\text{for all $i$, $j$,}
\end{equation}
that is,
\begin{equation}\label{eq:convert iteration matrices, 2}
(D^\Omega)^{-1}\,[f]^\Omega\,D^\Omega = (D^{\tilde{\Omega}})^{-1}\,[f]^{\tilde\Omega}\,D^{\tilde\Omega}
\end{equation}
where $D^\Omega$ is the diagonal matrix
$$D^\Omega=\begin{pmatrix}
\Omega_0     	&				&				&			\\
				& \Omega_1    	&				&			\\
				&				& \Omega_2   	&			\\
				&				&				& \ddots
\end{pmatrix}\in\tr_\Q^\times.$$
In particular, for every reference sequence $\Omega$ we have, with  $\mathbf 1$ denoting the constant sequence $(1,1,1,\dots)$:
\begin{equation}\label{eq:conjugate iteration matrix}
[f]^\Omega = D^\Omega(D^\Phi)^{-1}\,[f]\,D^\Phi (D^\Omega)^{-1} = D^\Omega\,[f]^{\mathbf 1}\,(D^\Omega)^{-1}.
\end{equation}
As first noticed by Jabotinsky \cite{J1,J2}, a crucial property of $[\ \ ]^\Omega$ is that it converts composition of power series into matrix multiplication  \cite[Section~3.7, Theorem~A]{Comtet-Book}:
\begin{equation}\label{eq:Jabotinsky}
[f\circ g]^\Omega=[f]^\Omega\cdot [g]^\Omega\qquad\text{for all $f,g\in zK[[z]]$.}
\end{equation}
To see this, repeatedly use \eqref{eq:power of y} to obtain
\begin{multline*}
\sum_{j\geq 0} [f\circ g]^\Omega_{ij}\, \Omega_j z^j 	=  \Omega_i (f\circ g)^i
										=  \Omega_i f^i\circ g 
										=  \sum_{k\geq 0} [f]^\Omega_{ik}\, \Omega_k g^k \\
										=  \sum_{k\geq 0} [f]^\Omega_{ik} \sum_{j\geq 0} [g]^\Omega_{kj}\, \Omega_j z^j 
										= \sum_{j\geq 0} \left(\sum_{k\geq 0} [f]^\Omega_{ik}[g]^\Omega_{kj}\right) \Omega_j z^j 
\end{multline*}
and compare the coefficients of $z^j$.
The matrix $[f]^\Omega$ is called the {\bf iteration matrix of $f$} with respect to $\Omega$ in \cite{Comtet-Book}. 
(To be precise, \cite{Comtet-Book} uses the transpose of our $[f]^\Omega$.)
For $[f]$, the term {\it convolution matrix}\/ of $f$ is also in use (cf.~\cite{Knuth}), and $[f]^{\mathbf 1}$ is called the {\it power matrix}\/ of $f$ in \cite{Schippers}.

The subset $zK^\times + z^2K[[z]]$ of $zK[[z]]$ forms a group under composition (with identity element $z$), and $f\mapsto [f]^\Omega$ restricts to an embedding of this group into the group $\tr_K^\times$ of units of $\tr_K$. 
(In particular, $[z]^\Omega=1$ for each $\Omega$.) As in \cite{Comtet-Book}, we say that $f\in zK[[z]]$ is {\bf unitary} if $f_1=1$. The set of unitary power series in $K[[z]]$ is a subgroup of $zK^\times + z^2K[[z]]$ under composition, whose image under  $f\mapsto [f]^\Omega$ 
is a subgroup of $1+\tr_K^1$ which we denote by $\mathcal M_K^\Omega$. If $\Omega$ is clear from the context, we simply write $\mathcal M_K=\mathcal M_K^\Omega$. By \eqref{eq:conjugate iteration matrix}, the matrix groups $\mathcal M_K^\Omega$, for varying $\Omega$, are all conjugate to each other. We call $\mathcal M_K^\Omega$ the {\bf group of iteration matrices over $K$} with respect to $\Omega$. 

Given $f\in K[[z]]$ of the form $f=z+z^{n+1}g$ with $n>0$ and $g\in K[[z]]$ such that $g(0)\neq 0$, we say that the {\bf iterative valuation} of $f$ is $n$; in symbols: $n=\itval(f)$. (See \cite{Ecalle2}.)
It is easy to see that for $f\in zK[[z]]$ and $n>0$, we have $f\in z+z^{n+1}K[[z]]$ if and only if $[f]^\Omega\in 1+\tr_K^n$.
For each $n>0$ we define the subgroup
$$\mathcal M_K^{\Omega,n} := \mathcal M_K^{\Omega}\cap (1+\tr_K^n) = \big\{ [f]^\Omega: f\in z+z^{n+1}K[[z]] \big\}$$
of $\mathcal M_K^\Omega$. Then 
$$\mathcal M_K^\Omega=\mathcal M_K^{\Omega,1}\supseteq \mathcal M_K^{\Omega,2}\supseteq\cdots\supseteq \mathcal M_K^{\Omega,n}\supseteq\cdots\quad\text{and}\quad \bigcap_{n>0} \mathcal M_K^{\Omega,n}=\{1\},$$ 
and if $f\in zK[[z]]$ is unitary with $f\neq z$, then $n=\itval(f)$ is the unique $n>0$ such that $[f]^\Omega\in\mathcal M_K^{\Omega,n}\setminus \mathcal M_K^{\Omega,n+1}$.

As shown by Erd{\H{o}}s and Jabotinsky \cite{EJ},
iteration matrices can be used to define ``fractional'' iterates of formal power series. 
Let $t$ be a new indeterminate and $K^*=K[t]$.

\begin{prop}[Erd{\H{o}}s and Jabotinsky]\label{prop:EJ}
Suppose $K$ is an integral domain, and
let $f\in zK[[z]]$ be unitary. Then there exists a unique power series $f^{[t]}\in zK^*[[z]]$ such that, writing $f^{[a]}:=f^{[t]}\big|_{t=a}\in zK[[z]]$ for $a\in K$:
\begin{enumerate}
\item $f^{[0]}=z$;
\item $f^{[a+1]}=f^{[a]}\circ f$ for all $a,b\in K$.
\end{enumerate}
The power series $f^{[t]}$ is given by
$$f^{[t]}=\sum_{j\geq 1} M_{1j}\frac{z^j}{j!}\quad\text{where } M := \sum_{n\geq 0} {t\choose n} \big([f]-1\big)^n\in\tr_{K^*}.$$
Here for every $n$ as usual
${t\choose n} = \frac{1}{n!}t(t-1)\cdots(t-n+1)\in\Q[t]$.
\end{prop}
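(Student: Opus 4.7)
Since $f\in zK[[z]]$ is unitary, $[f]\in 1+\tr_K^1$, so $N:=[f]-1\in\tr_K^1$ is strictly triangular. Then $N^n\in\tr_{K^*}^n$, and the series
$$M=\sum_{n\geq 0}\binom{t}{n}N^n$$
converges in the complete filtered algebra $\tr_{K^*}$. The strategy is to identify $M$ as the iteration matrix $[g]$ of a unique unitary power series $g\in zK^*[[z]]$, to set $f^{[t]}:=g$, and to deduce (1), (2), and uniqueness from matrix identities satisfied by $M$.

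The first step is the recursion
$$M|_{t=a+1}\;=\;M|_{t=a}\cdot(1+N)\;=\;M|_{t=a}\cdot[f]\qquad(a\in K),$$
which follows from Pascal's rule $\binom{t+1}{n}=\binom{t}{n}+\binom{t}{n-1}$ together with the fact that $M|_{t=a}$, being a polynomial in $N$, commutes with $N$. Since $M|_{t=0}=1$, induction yields $M|_{t=n}=[f]^n=[f^{\circ n}]$ for every $n\in\N$, the last equality by \eqref{eq:Jabotinsky}, where $f^{\circ n}$ denotes the $n$-fold composition $f\circ\cdots\circ f$ (with $f^{\circ 0}:=z$).

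Next, define $g:=\sum_{j\geq 1}M_{1j}\,z^j/j!\in zK^*[[z]]$; this is unitary because $M_{11}=1$ (only the $n=0$ summand contributes on the diagonal). The essential claim is that $[g]=M$, i.e., for all $i\leq j$,
$$M_{ij}\;=\;B_{ij}\bigl(M_{11},M_{12},\dots,M_{1,j-i+1}\bigr)\qquad\text{in }K[t].$$
By the previous step, both sides agree at every $t=n\in\N$, since $M|_{t=n}=[f^{\circ n}]$ is a genuine iteration matrix and therefore satisfies the Bell polynomial relations by definition of $[\ \ ]$. Because $\Q\subseteq K$, the natural numbers embed in $K$ as an infinite subset; because $K$ is an integral domain, so is $K[t]$, and a nonzero polynomial in $K[t]$ has only finitely many roots in $K$. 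Hence the identity holds, so $[g]=M$. This is the step I expect to require the most care; it is also where the integral-domain hypothesis on $K$ is genuinely used, bridging the formal definition of $M$ with the combinatorial content encoded by the Bell polynomials.

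Set $f^{[t]}:=g$. Property (1) is immediate from $[f^{[0]}]=M|_{t=0}=1=[z]$ together with injectivity of $h\mapsto[h]$ on unitary power series. Property (2) follows by specializing the recursion: $[f^{[a+1]}]=M|_{t=a+1}=M|_{t=a}\cdot[f]=[f^{[a]}]\cdot[f]=[f^{[a]}\circ f]$ by \eqref{eq:Jabotinsky}, and injectivity again yields $f^{[a+1]}=f^{[a]}\circ f$. Finally, any $h\in zK^*[[z]]$ satisfying (1) and (2) must obey $h|_{t=n}=f^{\circ n}=f^{[t]}|_{t=n}$ for every $n\in\N$ by induction, so each coefficient of $h-f^{[t]}$ is a polynomial in $t$ over the integral domain $K$ which vanishes on the infinite subset $\N\subseteq K$ and is therefore zero; this gives uniqueness.
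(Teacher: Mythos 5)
Your proof is correct, and it takes a route that is organized differently from the paper's. The paper never explicitly establishes $[f^{[t]}]=M$; instead, after verifying $M|_{t=n}=[f]^n$ via the binomial formula, it works entirely with the first row of $M$, proves the two-variable identity $f^{\circ(s+t)}=f^{\circ s}\circ f^{\circ t}$ in $K[s,t][[z]]$ by a polynomial-identity argument (agreement at all integer pairs $(s,t)$), and then specializes. You instead promote $[g]=M$ to the central claim, verified by a one-variable polynomial-identity argument, and pair it with the Pascal-rule recursion $M|_{t=a+1}=M|_{t=a}\cdot[f]$ valid for all $a\in K$. This buys you a somewhat cleaner derivation of (2): it becomes an immediate consequence of the recursion, the multiplicativity \eqref{eq:Jabotinsky}, and injectivity of $h\mapsto[h]$, without needing the two-indeterminate manipulation. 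The price is the intermediate lemma $[g]=M$, which the paper avoids by staying on the first row throughout. Both proofs use the integral-domain hypothesis and $\Q\subseteq K$ in the same essential way, to extrapolate from integer values of $t$; your observation about where this hypothesis enters is accurate, though it is also used in the uniqueness step (as you note there).
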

\begin{proof}
Since $[f]-1\in\tr_K^1$, the sum defining $M$
exists in $\tr_{K^*}$, and $M\big|_{t=n}=[f]^n$ for every $n$, by the binomial formula. 
Let 
$f^{\circ t}:=\sum_{j\geq 1} M_{1j}\frac{z^j}{j!}$,
and for an element $a$ in a ring extension of $K^*$ write $f^{\circ a}:=f^{\circ t}\big|_{t=a}$. 
Then
$[f^{\circ n}]_{1j}=M_{1j}\big|_{t=n}=([f]^n)_{1j}$ for every $j\geq 1$ and thus $f^{\circ n}$ is the $n$th iterate of $f$: $f^{\circ n}=f\circ f\circ\dots \circ f$ ($n$ times). In particular $f^{\circ 1}=f$ and
$f^{\circ(m+n)}=f^{\circ m}\circ f^{\circ n}$ for all $m$, $n$.
Hence
if $s$ is another indeterminate, then $f^{\circ (s+t)}=f^{\circ s}\circ f^{\circ t}$ (in $K[s,t][[z]]$), since the coefficients (of equal powers of $z$) of both sides of this equation are polynomials in $s$ and $t$ with coefficients in $K$ which agree for all integral values of $(s,t)$. This shows that $f^{\circ t}$ satisfies conditions (1) and (2) (with $f^{\circ\cdot}$ replacing $f^{[\,\cdot\,]}$ everywhere). If $f^{[t]}\in K^*[[z]]$ is any power series satisfying (1) and (2), then $f^{[n]}=f^{\circ n}$ is the $n$th iterate of $f$, for every $n$, and as before we deduce $f^{[t]}=f^{\circ t}$.
\end{proof}

The power series $f^{[a]}$ ($a\in K$) in this proposition form a subgroup of $zK[[z]]$ under composition which contains $f$; they may be thought of as ``fractional iterates'' of $f$. (This explains the choice of the term ``iteration matrix.'') 

\medskip

Some examples of iteration matrices are collected below. Many more (in the case where $\Omega=\Phi$) are given in \cite{Knuth}.

\begin{example} 
Suppose $f=\frac{z}{1-z}$. Then $$[f]_{ij}={j-1\choose i-1}\frac{j!}{i!}\in\N\qquad (i>0)$$ are the {\bf Lah numbers}; here and below we set ${j\choose i}:=0$ for $i>j$.
(See \cite[Section~3.3, Theorem~B]{Comtet-Book}.)
Thus if
$\Omega_n=\frac{1}{n}$ for each $n>0$, then by \eqref{eq:convert iteration matrices}
$$[f]^\Omega_{ij}=\frac{\Omega_i\Phi_j}{\Omega_j\Phi_i}[f]_{ij}={j\choose i}\qquad\text{for $i>0$,}$$ 
hence 
\begin{equation}\label{eq:binom}
[f]^\Omega = \begin{pmatrix}
1 & 0 & 0  & 0  & 0  & 0   & \cdots \\
  & 1 & 2  & 3  & 4  & 5   & \cdots \\
  &   & 1  & 3  & 6  & 10  &\cdots \\
  &   &    & 1  & 4  & 10  & \cdots \\
  &   &    &    &  1 & 5   & \cdots \\
  &   &    &    &    & 1   & \cdots \\
  &   &    &    &    &     & \ddots 
\end{pmatrix}\in\tr_\Z
\end{equation}
is Pascal's triangle of binomial coefficients (except for the first row).
\end{example}

\begin{example}
The Stirling numbers of the second kind have the egf
$$e^{x(e^z-1)}=\sum_{i,j} {j\brace i} x^i \frac{z^j}{j!},$$ 
cf.~\cite[Section~1.14,~(III)]{Comtet-Book} or \cite[(7.54)]{GKP}. Hence by \eqref{eq:Bell polynomials} we have
\begin{equation}\label{eq:Stirling itmatrix}
[e^z-1]=S,
\end{equation}
where $S$ is as in \eqref{eq:Stirling}.
The matrix $S$ is a unit in $\tr_\Z$, and it is well-known (see \cite[Section~3.6~(II)]{Comtet-Book}) that the entries of its inverse
\begin{equation}\label{eq:Sinverse}
S^{-1}=(S^{-1}_{ij})=\begin{pmatrix}
1 & 0 & 0  & 0  & 0  & 0   & \cdots \\
  & 1 & -1 & 2  & -6 & 24  & \cdots \\
  &   & 1  & -3 & 11 & -50 & \cdots \\
  &   &    & 1  & -6 & 35  & \cdots \\
  &   &    &    &  1 & -10 & \cdots \\
  &   &    &    &    & 1   & \cdots \\
  &   &    &    &    &     & \ddots  
\end{pmatrix}
\end{equation}
are the signed Stirling numbers of the first kind: $S^{-1}_{ij}=(-1)^{j-i}{j\brack i}$, where $j\brack i$ denotes the 
number of permutations of a $j$-element set having $i$ disjoint cycles.
Thus \eqref{eq:Jabotinsky} and \eqref{eq:Stirling itmatrix} yields
$\big[\log(1+z)\big]=S^{-1}$.
\end{example}

\section{The Lie Algebra of the Group of Iteration Matrices}\label{sec:lie}

\noindent
Throughout this section we let $K$ be a commutative ring which contains $\Q$ as a subring. We let $\Omega$ denote a reference sequence.
We need a description of the Lie algebra of the matrix group $\mathcal M_K=\mathcal M_K^\Omega$, generalizing the one of the Lie algebra of $\mathcal M_\C^{\mathbf 1}$ from \cite{Schippers}. The arguments follow \cite{Schippers}, except that we replace the complex-analytic ones used there by algebraic ones.

\begin{definition}
Let $h=\sum_{n} h_n z^{n}\in zK[[z]]$. The {\bf infinitesimal iteration matrix} of $h$ with respect to $\Omega$ is the triangular matrix
\begin{multline*}
\langle h\rangle^\Omega = \left(\langle h\rangle^\Omega_{ij}\right) = 
 \begin{pmatrix}
0 & 0  & 0    & 0    & 0    & \cdots \\
  & h_1& \frac{\Omega_1}{\Omega_2}\hskip0.5em h_2  & \frac{\Omega_1}{\Omega_3}\hskip0.5em h_3  & \frac{\Omega_1}{\Omega_4}\hskip0.5em h_4  & \cdots \\
  &   & \hskip1em 2h_1 & \frac{\Omega_2}{\Omega_3}2h_2 & \frac{\Omega_2}{\Omega_4}2h_3 &\cdots \\
  &   &    & \hskip1em 3h_1 & \frac{\Omega_3}{\Omega_4}3h_2 & \cdots \\
  &   &    &      & \hskip1em 4h_1 & \cdots \\
  &   &    &      &      & \ddots 
\end{pmatrix}\in\tr_K\\ \text{where $\langle h\rangle^\Omega_{ij}=\frac{\Omega_i}{\Omega_j}ih_{j-i+1}$.}
\end{multline*}
\end{definition}

Note that if $\Omega$, $\tilde\Omega$ are reference sequences, then
\begin{equation}\label{eq:conjugate h}
(D^\Omega)^{-1} \langle h\rangle^\Omega D^{\Omega} = (D^{\tilde\Omega})^{-1} \langle h\rangle^{\tilde\Omega} D^{\tilde\Omega},
\end{equation}
in particular
$$\langle h\rangle^\Omega = D^\Omega(D^\Phi)^{-1}\,\langle h\rangle\,D^\Phi (D^\Omega)^{-1} = D^\Omega\,\langle h\rangle^{\mathbf 1}\,(D^\Omega)^{-1}.$$

\begin{exampleNumbered}\label{ex:factorials}
For $h=\sum_{n} h_n z^{n}\in zK[[z]]$ we have
\begin{multline*}
\langle h\rangle :=\langle h\rangle^\Phi =
 \begin{pmatrix}
0 & 0  & 0    & 0    & 0    & \cdots \\
  & h_1& \frac{2!}{1!}\hskip0.5em h_2  & \frac{3!}{1!}\hskip0.5em h_3  & \frac{4!}{1!}\hskip0.5em h_4  & \cdots \\
  &   & \hskip1em 2h_1 & \frac{3!}{2!}2h_2 & \frac{4!}{2!}2h_3 &\cdots \\
  &   &    & \hskip1em 3h_1 & \frac{4!}{3!}3h_2 & \cdots \\
  &   &    &      & \hskip1em 4h_1 & \cdots \\
  &   &    &      &      & \ddots 
\end{pmatrix} \\
\text{where $\langle h\rangle_{ij}=\frac{j!}{(i-1)!}h_{j-i+1}$ for $i>0$.}
\end{multline*}
\end{exampleNumbered}

For each $n$ we have $h\in z^{n+1} K[[z]]$ if and only if $\langle h\rangle^\Omega \in \tr_K^n$.
We define the $K$-submodule
$$\frak m_K^{\Omega,n} :=  \big\{ \langle h\rangle^\Omega: h\in z^{n+1}K[[z]]\big\}$$
of $\tr_K^n$, and we set $\frak m_K^\Omega := \frak m_K^{\Omega,1}$; so 
$$\frak m_K^\Omega= \frak m_K^{\Omega,1}\supseteq  \frak m_K^{\Omega,2}\supseteq\cdots\supseteq  \frak m_K^{\Omega,n}\supseteq\cdots\quad\text{and}\quad\bigcap_{n>0}  \frak m_K^{\Omega,n}=\{0\}.$$ 
If $\Omega$ is clear from the context, we abbreviate $\frak m_K=\frak m_K^\Omega$ and $\frak m_K^n=\frak m_K^{\Omega,n}$.
We set
$$e_n^\Omega := \langle z^{n+1} \rangle^\Omega,$$
and we write $e_n$ if the reference sequence $\Omega$ is clear from the context.
The matrix $e_n=e_n^\Omega$ is $n$-diagonal; in fact
$$e_n = \diag_n \left(\textstyle\frac{\Omega_i}{\Omega_{i+n}}i\right)\in\frak m_K^{n}.$$
Clearly the infinitesimal iteration matrix with respect to $\Omega$ of a power series from $zK[[z]]$ can be uniquely written  as an infinite sum
$$h_1 e_0 + h_2 e_1 + \cdots\qquad\text{where $h_n\in K$ for every $n>0$.}$$
Using Lemma~\ref{lem:formulas for diagonals} one verifies easily that 
$$[e_m,e_n]=(m-n)e_{m+n}\qquad\text{for all $m$, $n$.}$$
This implies that 
$$\frak m_K^n=Ke_n+ Ke_{n+1}+ \cdots\qquad (n>0)$$ 
is an ideal of the Lie $K$-algebra $\tr^1_K$.
The main goal of this section is to show the following generalization of a result of Schippers \cite{Schippers}:

\begin{theorem}\label{thm:Schippers}
Let $n>0$. Then $\exp(\frak m_K^n) = \mathcal M_K^n$ \textup{(}and hence $\log(\mathcal M_K^n)=\frak m_K^n$\textup{)}.
\end{theorem}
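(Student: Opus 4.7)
The plan proceeds around a single observation underlying both inclusions: for any $h \in z^{n+1}K[[z]]$, the matrix $\exp(t\langle h\rangle^\Omega)\in\tr_{K^*}$ (with $K^* := K[t]$) coincides with the iteration matrix $[f^{[t]}]^\Omega$ of the formal flow $f^{[t]}$ generated by the vector field $h\cdot\partial_z$. The conjugation formulas \eqref{eq:conjugate iteration matrix}, \eqref{eq:conjugate h}, and \eqref{eq:conjugation logarithm} ensure that $\mathcal M_K^{\Omega,n}$, $\frak m_K^{\Omega,n}$, and $\exp$ all transform compatibly under the diagonal conjugation changing the reference sequence, so the argument runs uniformly in $\Omega$ and one may reduce to $\Omega=\mathbf 1$ if desired.

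For $\exp(\frak m_K^n)\subseteq \mathcal M_K^n$, given $h \in z^{n+1}K[[z]]$ set $N := \langle h\rangle^\Omega$, $Y(t) := \exp(tN)$, and extract the candidate flow from the second row: $f^{[t]} := \sum_{j\geq 1} Y(t)_{1j}\,\Omega_j z^j$, so $f^{[0]}=z$ and $f^{[t]}-z\in z^{n+1}K^*[[z]]$ since $N\in\tr_K^n$. Lemma~\ref{lem:derivative of exp(tM)} yields $\dot Y = YN$, and expanding the $(1,j)$-entry of $YN$ via $\langle h\rangle^\Omega_{kj}=\tfrac{\Omega_k}{\Omega_j}\,k\,h_{j-k+1}$ with the reindexing $m=j-k+1$ produces the flow equation $\partial_t f^{[t]} = h(z)\,\partial_z f^{[t]}$. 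Differentiating $\Omega_i (f^{[t]})^i = \sum_j [f^{[t]}]^\Omega_{ij}\,\Omega_j z^j$ in $t$, substituting the flow equation, and matching coefficients of $z^j$ then gives $\dot Z = ZN$ for $Z(t) := [f^{[t]}]^\Omega$, with $Z(0)=1=Y(0)$. Since $Y-Z\in\tr^1_{K^*}$, Lemma~\ref{lem:linear DE} forces $Y=Z$, and specializing $t=1$ gives $\exp(\langle h\rangle^\Omega) = [f^{[1]}]^\Omega \in \mathcal M_K^n$.

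For the reverse inclusion $\mathcal M_K^n\subseteq\exp(\frak m_K^n)$, take unitary $f\in z+z^{n+1}K[[z]]$ and invoke Proposition~\ref{prop:EJ} — whose proof uses only that $\Q\subseteq K$, via Vandermonde nondegeneracy — to obtain $f^{[t]}\in zK^*[[z]]$ with $f^{[s+t]}=f^{[s]}\circ f^{[t]}$. The Jabotinsky identity \eqref{eq:Jabotinsky} then gives $Y(t):=[f^{[t]}]^\Omega$ the one-parameter group property $Y(s+t)=Y(s)Y(t)$; differentiating in $s$ at $s=0$ yields $\dot Y(t) = Y(t) N$ for $N:=\dot Y(0)$, and the same uniqueness argument shows $Y(t)=\exp(tN)$. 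To identify $N$, differentiate $\Omega_i(f^{[t]})^i=\sum_j Y(t)_{ij}\,\Omega_j z^j$ in $t$, evaluate at $t=0$ (where $f^{[0]}=z$), and compare coefficients: with $h := \partial_t f^{[t]}|_{t=0}$ one finds $i\Omega_i z^{i-1}h = \sum_j N_{ij}\Omega_j z^j$, whence $N_{ij}=\tfrac{\Omega_i}{\Omega_j}\,i\,h_{j-i+1}$, i.e.\ $N=\langle h\rangle^\Omega$. Since $[f]^\Omega\in 1+\tr_K^n$ forces $N\in\tr_K^n$, also $h\in z^{n+1}K[[z]]$ and $N\in\frak m_K^n$.

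The crux of the argument, shared by both directions, is the coefficient identity $\partial_t[f^{[t]}]^\Omega\bigl|_{t=0}=\langle \partial_t f^{[t]}\bigl|_{t=0}\rangle^\Omega$, a direct but slightly fiddly reindexing of Bell-polynomial derivatives, together with the flow equation $\partial_t f^{[t]}=h\cdot\partial_z f^{[t]}$ extracted from Lemma~\ref{lem:derivative of exp(tM)}. Once these are in place, the uniqueness content of Lemma~\ref{lem:linear DE} immediately pins down the one-parameter family $[f^{[t]}]^\Omega=\exp(tN)$ and both inclusions drop out.
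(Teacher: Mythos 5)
Your argument for the inclusion $\exp(\frak m_K^n)\subseteq\mathcal M_K^n$ is essentially the paper's: extract $f^{[t]}$ from the top nontrivial row of $\exp(t\langle h\rangle^\Omega)$, verify the formal Loewner equation, push it through Lemma~\ref{lem:matrix differential}, and invoke the uniqueness in Lemma~\ref{lem:linear DE} on the strictly triangular difference; this is precisely Proposition~\ref{prop:Schippers} and Corollary~\ref{cor:Schippers}.

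For the reverse inclusion $\mathcal M_K^n\subseteq\exp(\frak m_K^n)$ you take a genuinely different route. The paper proceeds by an explicit recursion: Lemma~\ref{lem:3.10 corrected} and its refinements show that $(\exp\langle h\rangle^\Omega)_{1j}=\frac{1}{\Omega_j}h_j+P^\Omega_j(h_2,\dots,h_{j-1})$, a unitriangular polynomial system which is then inverted to solve $(\exp\langle h\rangle^\Omega)_{1j}=f_j$ for the $h_j$, after which Corollary~\ref{cor:Schippers} closes the loop. You instead invoke Proposition~\ref{prop:EJ} to manufacture the fractional iterates $f^{[t]}$ with the group law $f^{[s+t]}=f^{[s]}\circ f^{[t]}$, pass through the Jabotinsky homomorphism \eqref{eq:Jabotinsky} to get a formal one-parameter matrix group $Y(t)=[f^{[t]}]^\Omega$, differentiate the group law to identify $Y(t)=\exp(tN)$ with $N=\dot Y(0)$, and then compute directly from $\Omega_i(f^{[t]})^i=\sum_j Y(t)_{ij}\Omega_j z^j$ that $N=\langle h\rangle^\Omega$ with $h=\partial_t f^{[t]}\big|_{t=0}\in z^{n+1}K[[z]]$. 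This is more Lie-theoretic and avoids the lemmas on the isobaric structure of $(\exp M)_{1j}$ entirely (which the paper develops in part to repair an error in Schippers's Lemma~3.10). The trade-off is that you rely on Proposition~\ref{prop:EJ}; you correctly observe that although the paper states it for integral domains, its proof only uses $\Q\subseteq K$ (a polynomial over such $K$ vanishing on $\N^2$ is zero by Vandermonde), so the hypotheses of Theorem~\ref{thm:Schippers}, which allow any commutative $K\supseteq\Q$, are covered. One cosmetic slip: differentiating $Y(s+t)=Y(s)Y(t)$ in $s$ at $s=0$ gives $\dot Y(t)=NY(t)$, not $Y(t)N$ as written; either form is usable here since $N$ commutes with $Y(t)$, or simply differentiate in $t$ at $t=0$ to match the orientation of Lemma~\ref{lem:linear DE}. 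Both approaches are sound; the paper's gives a constructive recursion for the coefficients $h_j$, while yours is shorter once Proposition~\ref{prop:EJ} is in hand.
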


\begin{exampleNumbered}\label{ex:embeddable}
Let $f=\frac{z}{1-z}\in z\Q[[z]]$, and suppose $\Omega_n=\frac{1}{n}$ for every $n>0$. Then by \eqref{eq:exp of diag1} and \eqref{eq:binom} one sees easily that
$$\log\,[f]^\Omega = \diag_1(0,2,3,4,\dots) = 
\begin{pmatrix}
0 & 0 & 0  &    &    & \cdots \\
  & 0 & 2  & 0  &    & \cdots \\
  &   & 0  & 3  & 0  & \cdots \\
  &   &    & 0  & 4  & \cdots \\
  &   &    &    & 0  & \cdots \\
  &   &    &    &    & \ddots 
\end{pmatrix}=\langle z^2\rangle^\Omega\in\frak m_\Q^1.$$
\end{exampleNumbered}

We give the proof of this theorem after some preparatory results.
Below we let $t$ be a new indeterminate and $K^*=K[t]$.

\begin{lemma}\label{lem:matrix differential}
Let $f\in zK^*[[z]]$ and $h\in zK[[z]]$ satisfy
$$\frac{\partial f}{\partial t} = \frac{\partial f}{\partial z}\, h.$$
Then
$$\frac{d}{dt} [f]^{\Omega} = [f]^{\Omega}\langle h\rangle^{\Omega}.$$
\end{lemma}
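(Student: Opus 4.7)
The plan is to work directly from the defining identity of iteration matrices,
$$\Omega_i f^i = \sum_{j\geq 0} [f]^{\Omega}_{ij}\,\Omega_j z^j\qquad\text{(in $K^*[[z]]$),}$$
differentiate with respect to $t$, use the hypothesis to turn the $t$-derivative into a $z$-derivative multiplied by $h$, and then compare coefficients against the matrix-product formula. A convenient first step is to use the conjugation identities \eqref{eq:conjugate iteration matrix} and \eqref{eq:conjugate h}: since the diagonal matrix $D^\Omega$ has entries in $K$ (constants in $t$), we have $\frac{d}{dt}[f]^\Omega = D^\Omega\bigl(\frac{d}{dt}[f]^{\mathbf 1}\bigr)(D^\Omega)^{-1}$ and $[f]^\Omega\langle h\rangle^\Omega = D^\Omega\,[f]^{\mathbf 1}\langle h\rangle^{\mathbf 1}(D^\Omega)^{-1}$, which reduces the problem to the case $\Omega=\mathbf 1$ (and thereby avoids carrying factors $\Omega_j/\Omega_i$ through the calculation).

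Assuming $\Omega=\mathbf 1$, so $[f]_{ij}^{\mathbf 1}$ is just the coefficient of $z^j$ in $f^i$ and $\langle h\rangle_{jk}^{\mathbf 1}=j\,h_{k-j+1}$ for $h=\sum_{m\geq 1}h_m z^m$, I would apply $\partial/\partial t$ to both sides of $f^i = \sum_j [f]^{\mathbf 1}_{ij}\,z^j$. The right-hand side yields $\sum_j \frac{d}{dt}[f]^{\mathbf 1}_{ij}\,z^j$, whose $z^k$-coefficient is $\frac{d}{dt}[f]^{\mathbf 1}_{ik}$. The left-hand side equals $i f^{i-1}\frac{\partial f}{\partial t}$; by the hypothesis this is $i f^{i-1}\frac{\partial f}{\partial z}\,h = \frac{\partial (f^i)}{\partial z}\,h$. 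Substituting $f^i=\sum_j [f]^{\mathbf 1}_{ij} z^j$ gives $\bigl(\sum_j j\,[f]^{\mathbf 1}_{ij} z^{j-1}\bigr)\bigl(\sum_{m\geq 1}h_m z^m\bigr)$, whose $z^k$-coefficient is
$$\sum_{j\leq k} j\,[f]^{\mathbf 1}_{ij}\,h_{k-j+1} = \sum_j [f]^{\mathbf 1}_{ij}\,\langle h\rangle^{\mathbf 1}_{jk} = \bigl([f]^{\mathbf 1}\langle h\rangle^{\mathbf 1}\bigr)_{ik}.$$
Comparing the two expressions for the $z^k$-coefficient yields the claimed matrix identity for every row $i$ and column $k$.

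There is no real obstacle here: the argument is a direct, symbolic computation, the only subtlety being the verification that all the formal sums converge in the relevant topologies (which is automatic since $h\in zK[[z]]$ and $f\in zK^*[[z]]$, so only finitely many terms contribute to any given coefficient of $z^k$). The reduction to $\Omega=\mathbf 1$ via \eqref{eq:conjugate iteration matrix} and \eqref{eq:conjugate h} is the small observation that keeps index bookkeeping clean; without it one arrives at the same equality but must track factors of $\Omega_j/\Omega_k$ throughout.
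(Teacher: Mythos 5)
Your proof is correct and follows essentially the same computation as the paper's: differentiate the defining identity $\Omega_i f^i = \sum_j [f]^\Omega_{ij}\Omega_j z^j$ with respect to $t$, use the hypothesis to rewrite $\partial_t(f^i) = i f^{i-1}\partial_t f$ as $\partial_z(f^i)\cdot h$, and compare $z^k$-coefficients. The preliminary conjugation to reduce to $\Omega=\mathbf 1$ (legitimate since $D^\Omega$ has $t$-constant entries) is a small bookkeeping convenience that the paper skips, but the core argument is identical.
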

\begin{proof}
We need to show that for all $i$ we have
$$\frac{d}{dt}[f]^{\Omega}_{ij}=\big([f]^{\Omega}\langle h\rangle^{\Omega}\big)_{ij}\qquad\text{for each $j$.}$$
For $i=0$ this is an easy computation, so suppose $i>0$. We have
$$\frac{\partial f^i}{\partial z}=\sum_{j\geq 1} j[f]^{\Omega}_{ij}\Omega_j\,z^{j-1}$$
and hence
$$\frac{\partial f^i}{\partial z}\,h=\sum_{j\geq 0} \left(\sum_{k=1}^j k[f_{ik}]^{\Omega}\Omega_k h_{j-k+1}\right) z^j.$$
Moreover
$$\frac{\partial f^i}{\partial t}=\sum_{j\geq 0} \frac{d}{dt} [f]^{\Omega}_{ij}\Omega_j\,z^j.$$
By the hypothesis of the lemma
$$
\frac{\partial f^i}{\partial t} = if^{i-1}\frac{\partial f}{\partial t} = if^{i-1} \frac{\partial f}{\partial z}\, h=
\frac{\partial f^i}{\partial z}\,h,$$
hence
$$\frac{d}{dt}[f]^{\Omega}_{ij} = \sum_{k=1}^{j} k[f_{ik}]^{\Omega}\frac{\Omega_k}{\Omega_j}h_{j-k+1}=\big([f]^{\Omega}\langle h\rangle^{\Omega}\big)_{ij}$$
for each $j$ as required. 
\end{proof}

This lemma is used in the proof of the following important proposition:

\begin{prop}\label{prop:Schippers}
Let $h\in z^{n+1}K[[z]]$, where $n>0$, and set
$$f_t := \sum_{j\geq 1}\, (\exp\,t\langle h\rangle^\Omega)_{1j}\Omega_j\, z^j \in z+z^{n+1}K^*[[z]].$$
Then
\begin{equation}\label{eq:Loewner}
\frac{\partial f_t}{\partial t} = \frac{\partial f_t}{\partial z}\,h
\end{equation}
and hence
\begin{equation}\label{eq:Loewner, 2}
[f_t]^{\Omega} = \exp\,t\langle h\rangle^\Omega.
\end{equation}
\end{prop}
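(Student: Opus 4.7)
The plan is to first verify the PDE \eqref{eq:Loewner} by a direct coefficient computation, after which \eqref{eq:Loewner, 2} will follow from Lemma~\ref{lem:matrix differential} combined with the uniqueness statement of Lemma~\ref{lem:linear DE}.

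For \eqref{eq:Loewner}, I would set $A(t) := \exp(t\langle h\rangle^\Omega)$, so that $f_t = \sum_{j\geq 1} A_{1j}\,\Omega_j\,z^j$ by definition. Lemma~\ref{lem:derivative of exp(tM)} yields $\frac{dA}{dt} = A\langle h\rangle^\Omega$, hence
$$\frac{dA_{1j}}{dt} = \sum_{k\geq 0} A_{1k}\,(\langle h\rangle^\Omega)_{kj} = \sum_{k=1}^{j} k\,A_{1k}\,\frac{\Omega_k}{\Omega_j}\,h_{j-k+1}$$
by the formula for $\langle h\rangle^\Omega$. Multiplying by $\Omega_j z^j$ and summing over $j$ gives
$$\partial_t f_t = \sum_{j\geq 1}\sum_{k=1}^{j} k\,A_{1k}\,\Omega_k\,h_{j-k+1}\,z^j.$$
On the other hand, $\partial_z f_t = \sum_{j\geq 1} j\,A_{1j}\,\Omega_j\,z^{j-1}$, and the Cauchy product with $h = \sum_{m\geq 1} h_m z^m$ reindexes (via $N = j+m-1$) to exactly the same double sum, giving \eqref{eq:Loewner}. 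The inclusion $f_t \in z + z^{n+1}K^*[[z]]$ asserted in the statement is immediate: since $\langle h\rangle^\Omega \in \frak m_K^n \subseteq \tr^n_K$ we have $A \in 1 + \tr^n_{K^*}$, which forces $A_{11} = 1$ and $A_{1j} = 0$ for $2 \leq j \leq n$.

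With \eqref{eq:Loewner} in hand, Lemma~\ref{lem:matrix differential} (applied over $K^*$) gives $\frac{d}{dt}[f_t]^\Omega = [f_t]^\Omega\,\langle h\rangle^\Omega$, while Lemma~\ref{lem:derivative of exp(tM)} shows that $A$ satisfies the same linear ODE $\frac{dA}{dt} = A\,\langle h\rangle^\Omega$. The difference $Y := [f_t]^\Omega - A$ thus satisfies $\frac{dY}{dt} = Y\,\langle h\rangle^\Omega$, and since both $[f_t]^\Omega$ and $A$ lie in $1 + \tr^n_{K^*}$ we have $Y \in \tr^n_{K^*} \subseteq \tr^1_{K^*}$. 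The initial condition is $Y|_{t=0} = [z]^\Omega - I = 0$, so Lemma~\ref{lem:linear DE} (with $Y_0 = 0$) forces $Y \equiv 0$, which is exactly \eqref{eq:Loewner, 2}.

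The only real obstacle is the careful index-shuffling in the direct verification of \eqref{eq:Loewner}; once that coefficient computation is settled, the rest collapses to a one-line uniqueness argument for a homogeneous linear matrix ODE. Setting up the argument via the difference $Y$ (rather than trying to apply Lemma~\ref{lem:linear DE} to $[f_t]^\Omega$ itself, whose initial value is the identity, not a strictly triangular matrix) is what makes the hypotheses of that lemma directly applicable.
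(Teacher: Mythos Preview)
Your proof is correct and follows essentially the same approach as the paper: verify \eqref{eq:Loewner} by a direct coefficient computation using Lemma~\ref{lem:derivative of exp(tM)}, then invoke Lemma~\ref{lem:matrix differential} and the uniqueness in Lemma~\ref{lem:linear DE} to conclude \eqref{eq:Loewner, 2}. Your handling of Lemma~\ref{lem:linear DE} via the difference $Y=[f_t]^\Omega-\exp(t\langle h\rangle^\Omega)\in\tr_{K^*}^1$ with $Y_0=0$ is in fact slightly more careful than the paper's own invocation (which applies the lemma with initial value $1\notin\tr_K^1$, relying implicitly on the same reduction to $Y_0=0$ that is carried out inside the proof of Lemma~\ref{lem:linear DE}).
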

\begin{proof}
By Lemma~\ref{lem:derivative of exp(tM)} we have
$$\frac{d}{dt} \exp\,t\langle h\rangle^\Omega = (\exp\,t\langle h\rangle^\Omega)\,\langle h\rangle^{\Omega}.$$
Hence
\begin{align*}
\frac{\partial f_t}{\partial t}	&= 
\sum_{j\geq 1} \, \left(\frac{d}{dt}\exp\,t\langle h\rangle^\Omega\right)_{1j}\Omega_j\, z^j \\
&= \sum_{j\geq 1} \, \big((\exp\,t\langle h\rangle^\Omega)\langle h\rangle^\Omega\big)_{1j}\Omega_j\, z^j \\
&= \sum_{j\geq 1} \left( \sum_{i=1}^j (\exp\,t\langle h\rangle^\Omega)_{1i}\langle h\rangle^\Omega_{ij}\Omega_j\right) z^j  \\
&= \sum_{j\geq 1} \left(\sum_{i=1}^j (\exp\,t\langle h\rangle^\Omega)_{1i}ih_{j-i+1}\Omega_i\right) z^j = \frac{\partial f_t}{\partial z}\,h. 
\end{align*}
By Lemma~\ref{lem:matrix differential} this yields $\frac{d}{dt} [f_t]^{\Omega} = [f_t]^{\Omega}\langle h\rangle^{\Omega}$. This shows that both $Y=[f_t]^\Omega$ and $Y=\exp\,t\langle h\rangle^\Omega$ satisfy $\frac{dY}{dt}=Y \,\langle h\rangle^{\Omega}$ and $Y\big\lvert_{t=0} =1$.
Hence $[f_t]^{\Omega} = \exp\,t\langle h\rangle^\Omega$ by Lemma~\ref{lem:linear DE}.
\end{proof}

The equation \eqref{eq:Loewner} is called the {\it formal Loew\-ner partial differential equation}\/ in \cite{Schippers}.
The following corollary, obtained by setting $t=1$ in \eqref{eq:Loewner, 2} above, shows in particular that $\exp(\mathfrak m_K^n)\subseteq\mathcal M_K^n$ for each $n>0$:

\begin{cor}\label{cor:Schippers}
Let $h\in z^{n+1}K[[z]]$, where $n>0$, and set
$$f := \sum_{j\geq 1}\, (\exp\,\langle h\rangle^\Omega)_{1j}\Omega_j\, z^j \in z+z^{n+1}K[[z]].$$
Then $[f]^\Omega=\exp\, \langle h\rangle^\Omega$.
\end{cor}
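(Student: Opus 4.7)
The plan is to derive the corollary directly from Proposition~\ref{prop:Schippers} by specializing the indeterminate $t$ to $1$. The first step is to observe that the series $f$ in the statement is precisely $f_t\big\lvert_{t=1}$, where $f_t$ is the power series constructed in Proposition~\ref{prop:Schippers}. Since $f_t\in z+z^{n+1}K^*[[z]]$, this substitution lands in $z+z^{n+1}K[[z]]$, confirming the parenthetical claim about the location of $f$.

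Next I would verify that the evaluation homomorphism $\varepsilon\colon K^*=K[t]\to K$, $t\mapsto 1$, commutes with both operations appearing in \eqref{eq:Loewner, 2}. The iteration-matrix construction is compatible with $\varepsilon$ because each entry $[g]^\Omega_{ij}$ is a fixed polynomial (a Bell polynomial) in the coefficients $g_1,\dots,g_{j-i+1}$ of $g$, so $[g]^\Omega\big\lvert_{t=1}=[g\big\lvert_{t=1}]^\Omega$ for any $g\in zK^*[[z]]$. The matrix exponential is compatible with $\varepsilon$ because $\varepsilon$ extends entrywise to a $K$-algebra morphism $\tr_{K^*}\to\tr_K$ which is continuous with respect to the filtrations $(\tr^n)$; any such morphism commutes with the summations defining $\exp$ on $\tr^1$.

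With these two compatibilities in hand, applying $\varepsilon$ entrywise to both sides of \eqref{eq:Loewner, 2} yields
$$[f]^\Omega=\bigl[f_t\big\lvert_{t=1}\bigr]^\Omega=[f_t]^\Omega\big\lvert_{t=1}=\bigl(\exp t\langle h\rangle^\Omega\bigr)\big\lvert_{t=1}=\exp\langle h\rangle^\Omega,$$
which is the desired conclusion.

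I do not anticipate any serious obstacle: all of the substance of the argument is already packaged in Proposition~\ref{prop:Schippers}, and the work of setting $t=1$ is purely formal. The only mild housekeeping point is making sure that both sides of \eqref{eq:Loewner, 2} genuinely live in $\tr_{K^*}$ with polynomial (rather than formal-power-series) dependence on $t$, so that the specialization $t\mapsto 1$ is unambiguous; this is immediate from the construction of $f_t$ and from $\langle h\rangle^\Omega\in\tr^n_K\subseteq\tr^1_K$.
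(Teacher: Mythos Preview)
Your proposal is correct and follows exactly the paper's own approach: the corollary is obtained simply by setting $t=1$ in \eqref{eq:Loewner, 2}. The additional housekeeping you spell out (compatibility of the evaluation $t\mapsto 1$ with the iteration-matrix construction and with $\exp$) is implicit in the paper but entirely appropriate to make explicit.
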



As above we write $e_k=e_k^\Omega$.
Given $k_1,\dots,k_n$ and $k=k_1+\cdots+k_n$, we have
$$e_{k_1} \cdots e_{k_n} = \diag_{k} \left(\textstyle\frac{\Omega_i}{\Omega_{i+k}}\,i(i+k_1)(i+k_1+k_2)\cdots (i+k_1+\cdots+k_{n-1})\right)_{i\geq 0}$$
by Lemma~\ref{lem:formulas for diagonals}.
Now let $M:=\langle h\rangle^{\Omega}$ where $h\in zK[[z]]$. So
$$M=\langle h\rangle^\Omega=h_1 e_0 + h_2 e_1 + \cdots$$
and hence
$$M^n = \sum_{k_1,\dots,k_n} h_{k_1+1}\cdots h_{k_n+1}\, e_{k_1}\cdots e_{k_n},$$
that is,
\begin{equation}\label{eq:Mn}
(M^n)_{ij} = \sum_{k_1+\cdots+k_n=j-i} h_{k_1+1}\cdots h_{k_n+1}\, \textstyle\frac{\Omega_i}{\Omega_{j}}\,i(i+k_1)\cdots (i+k_1+\cdots+k_{n-1})
\end{equation}
for all $i$, $j$. This observation leads to:

\begin{lemma}\label{lem:3.10 corrected}
Suppose $n>0$. Then 
$$(M^n)_{11} = h_1^n,\qquad (M^n)_{1j} = \frac{j^n-1}{\Omega_j(j-1)}h_1^{n-1}h_{j} + P^\Omega_{nj}(h_1,\dots,h_{j-1})\quad\text{for $j\geq 2$,}$$
where $P^\Omega_{nj}(Y_0,\dots,Y_{j-2})\in\Q[Y_0,\dots,Y_{j-2}]$ is homogeneous of degree $n$ and isobaric of weight $j-1$, and independent of $h$. \textup{(}Here each $Y_i$ is assigned weight $i$.\textup{)}
\end{lemma}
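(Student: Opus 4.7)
The plan is to read everything off directly from formula \eqref{eq:Mn} specialized to $i=1$. Since $\Omega_1=1$, that formula becomes
$$(M^n)_{1j} \;=\; \frac{1}{\Omega_j}\sum_{\substack{k_1+\cdots+k_n=j-1\\ k_1,\dots,k_n\geq 0}} h_{k_1+1}\cdots h_{k_n+1}\cdot (1+k_1)(1+k_1+k_2)\cdots(1+k_1+\cdots+k_{n-1}),$$
where the product has $n$ factors (including the leading $1$). For $j=1$ the only index tuple is $(k_1,\dots,k_n)=(0,\dots,0)$, and the claim $(M^n)_{11}=h_1^n$ is immediate.

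For $j\geq 2$, the first step is to isolate the monomials that involve $h_j$. Since $h_{k_s+1}=h_j$ forces $k_s=j-1$, and $k_1+\cdots+k_n=j-1$ with all $k_l\geq 0$, the only tuples producing $h_j$ are those for which some $s\in\{1,\dots,n\}$ has $k_s=j-1$ and all other $k_l=0$; each such tuple contributes $h_1^{n-1}h_j$. The second step is to evaluate the weight product $(1+k_1)(1+k_1+k_2)\cdots(1+k_1+\cdots+k_{n-1})$ on such a tuple: the partial sums $k_1+\cdots+k_l$ equal $0$ for $l<s$ and $j-1$ for $l\geq s$, so the product equals $\underbrace{1\cdots 1}_{s-1}\cdot\underbrace{j\cdots j}_{n-s}=j^{n-s}$. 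Summing over $s=1,\dots,n$ then yields the geometric series
$$\sum_{s=1}^{n} j^{n-s}=\frac{j^n-1}{j-1},$$
and dividing by $\Omega_j$ produces the asserted coefficient $\frac{j^n-1}{\Omega_j(j-1)}$ of $h_1^{n-1}h_j$.

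The remainder, call it $P^\Omega_{nj}(h_1,\dots,h_{j-1})$, is the sum over the tuples in which every $k_s\leq j-2$; those tuples only produce monomials in $h_1,\dots,h_{j-1}$, so $P^\Omega_{nj}$ is indeed a polynomial in $Y_0,\dots,Y_{j-2}$ over $\Q$. Each such monomial is a product of $n$ entries from $\{h_1,h_2,\dots\}$, whence $P^\Omega_{nj}$ is homogeneous of degree $n$ in the $Y_i$. Assigning weight $i$ to $Y_i$, i.e.\ weight $k_s$ to $h_{k_s+1}$, every monomial in the sum has total weight $k_1+\cdots+k_n=j-1$, so $P^\Omega_{nj}$ is isobaric of weight $j-1$. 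Independence of $P^\Omega_{nj}$ from $h$ is built into the construction, as the coefficients depend only on $\Omega$, $n$, and $j$.

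The argument is essentially bookkeeping, and there is no genuine obstacle; the only place that requires care is the case analysis distinguishing the tuples that contain $h_j$ (which must be of the special form above because $k_1+\cdots+k_n=j-1$), and the ensuing telescoping of the weight product into the geometric sum $(j^n-1)/(j-1)$.
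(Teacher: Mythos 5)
Your argument is correct and is essentially the paper's own proof, just with the bookkeeping spelled out: both read the claim off directly from \eqref{eq:Mn} at $i=1$, observe that $h_j$ can only arise from tuples with exactly one $k_s=j-1$ (all others zero), and sum the resulting weight products $j^{n-s}$ over $s=1,\dots,n$ to get $(j^n-1)/(j-1)$. (Minor cosmetic point: you write $s-1$ ones because you've dropped the leading factor $i=1$ from the product; including it gives $s$ ones, but the product $j^{n-s}$ is the same either way.)
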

\begin{proof}
Set $i=1$ in \eqref{eq:Mn}. Then the only terms involving $h_{j}$ in this sum are those of the form
$h_1^{n-1}h_{j} \,\frac{1}{\Omega_{j}}\,j^{n-m}$ where $m\in\{1,\dots,n\}$.
This yields the lemma.
\end{proof}

An analogue of the preceding lemma (for $K=\C$ and $\Omega=\mathbf 1$) is Lemma~3.10 of \cite{Schippers}; however, the formula given there is wrong:

\begin{example}
Suppose $h=h_1z+h_2z^2$ and $\Omega=\mathbf 1$. Then 
$$M = \langle h\rangle^{\mathbf 1} = \begin{pmatrix}
0 & 0  & 0    & 0     & 0 & \\
  & h_1&  h_2  & 0  & 0 & \ddots \\
  &   &  2h_1 & 2h_2 & 0 &\ddots \\
  &   &    & 3h_1    & 3h_2 &\ddots \\
  &   &    &       & 4h_1 & \ddots \\
  &   &    &      &   &\ddots 
\end{pmatrix}$$
and hence
$$M^2 = \begin{pmatrix}
0 & 0  & 0    & 0     & 0 & \\
  & h_1^2&  3h_1h_2  & 2h_2^2  & 0 & \ddots \\
  &   &  4h_1^2 & 10h_1h_2 & 6h_2^2 &\ddots \\
  &   &    & 9h_1^2    & 21h_1h_2 &\ddots \\
  &   &    &       & 16h_1^2 & \ddots \\
  &   &    &      &   &\ddots 
\end{pmatrix}.$$
According to \cite[Lemma~3.10]{Schippers} we should have, for $j\geq 2$:
$$(M^2)_{1j}=2h_1h_{j}+\text{polynomial in $h_1,\dots,h_{j-1}$.}$$
However $(M^2)_{12}=3h_1h_2$ is not of this form. 
\end{example}

In the proof of Theorem~\ref{thm:Schippers} we are concerned with the case where $h\in z^2K[[z]]$, for which we need a refinement of Lemma~\ref{lem:3.10 corrected}:

\begin{lemma}
Suppose $h\in z^2K[[z]]$ and $n>0$. Then 
$$(M^n)_{1j} = 
\begin{cases} 
\frac{1}{\Omega_j}h_{j}		&\text{if $n=1$,} \\
P^\Omega_{nj}(h_1,\dots,h_{j-1})	&\text{if $1<n<j$,} \\
0								&\text{otherwise.}
\end{cases}$$
\end{lemma}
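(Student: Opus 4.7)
The plan is to specialize the general formula~\eqref{eq:Mn} for $(M^n)_{1j}$ to the hypothesis $h\in z^2K[[z]]$ (equivalently, $h_1=0$). Setting $i=1$ in~\eqref{eq:Mn} and using $\Omega_1=1$, one obtains
$$(M^n)_{1j} \ =\ \frac{1}{\Omega_j}\sum_{k_1+\cdots+k_n=j-1} h_{k_1+1}\cdots h_{k_n+1}\cdot 1\cdot (1+k_1)\cdots(1+k_1+\cdots+k_{n-1}),$$
and the vanishing of $h_1$ restricts the sum to compositions $(k_1,\ldots,k_n)$ of $j-1$ with every $k_r\geq 1$. The three cases of the lemma correspond to three ranges of $n$ and follow by inspecting which compositions survive.

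For $n=1$, the defining formula $\langle h\rangle^\Omega_{1j}=h_j/\Omega_j$ gives the stated value directly. For $n\geq j$, a composition of $j-1$ into $n$ positive parts would require $n\leq j-1$, so no composition contributes and $(M^n)_{1j}=0$.

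For the middle range $1<n<j$, I would invoke Lemma~\ref{lem:3.10 corrected}, which gives
$$(M^n)_{1j} \ =\ \frac{j^n-1}{\Omega_j(j-1)}\,h_1^{n-1}h_j + P^\Omega_{nj}(h_1,\ldots,h_{j-1});$$
since $n\geq 2$ and $h_1=0$, the first summand vanishes, leaving $P^\Omega_{nj}(h_1,\ldots,h_{j-1})$ as required. An alternative, self-contained argument runs as follows: any summand of the displayed sum containing $h_j$ must have some $k_r=j-1$, and the remaining positivity constraints $k_s\geq 1$ together with $k_1+\cdots+k_n=j-1$ then force $n=1$, contradicting $n\geq 2$. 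Thus no $h_j$ appears, and the sum defines the desired polynomial in $h_1,\ldots,h_{j-1}$ with $\Q$-coefficients independent of~$h$.

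I do not foresee any genuine obstacle: the entire refinement is a direct consequence of Lemma~\ref{lem:3.10 corrected} combined with an elementary pigeonhole observation about compositions of $j-1$ into positive parts.
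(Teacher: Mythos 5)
Your proof is correct, and the treatment of the $n=1$ case and the middle case $1<n<j$ (via Lemma~\ref{lem:3.10 corrected}, with $h_1=0$ killing the leading term) coincides with the paper's. The one place you diverge is the vanishing case $n\geq j$: you specialize formula~\eqref{eq:Mn} and observe there is no composition of $j-1$ into $n$ positive parts, while the paper simply notes $M\in\tr_K^1$ (since $h_1=0$), hence $M^n\in\tr_K^n$, hence $(M^n)_{1j}=0$ for $j\leq n$ by definition of the filtration. Both arguments are short and valid; the paper's is a touch slicker because it reuses structure already in place, whereas yours is more explicit and, as you note, also yields a self-contained route to the middle case (no term of the sum can involve $h_j$ once $n\geq 2$ and $h_1=0$), making Lemma~\ref{lem:3.10 corrected} dispensable if one prefers. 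One small caveat on the self-contained variant: to conclude that the resulting polynomial is literally $P^\Omega_{nj}(h_1,\dots,h_{j-1})$ (rather than merely \emph{some} polynomial in $h_1,\dots,h_{j-1}$), you should still compare with the decomposition in Lemma~\ref{lem:3.10 corrected}, since $P^\Omega_{nj}$ is defined there; your phrasing ``the sum defines the desired polynomial'' glosses over this identification, though the gap is trivial to fill.
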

\begin{proof}
We have $h_1=0$, hence if $n>1$ then $(M^n)_{1j} = P^\Omega_{nj}(h_1,\dots,h_{j-1})$ by the previous lemma. 
We have $M\in\tr^1_K$ and hence $M^n\in\tr^n_K$, so $(M^n)_{1j}=0$ if $j-1<n$, that is, if $j\leq n$. The lemma follows.
\end{proof}

\begin{cor}
Suppose $h\in z^2K[[z]]$. Then for $j\geq 2$:
$$(\exp M)_{1j} = \frac{1}{\Omega_j}h_{j} + P^\Omega_j(h_2,\dots,h_{j-1})$$
where  $P^\Omega_{j}(Y_1,\dots,Y_{j-2})\in\Q[Y_1,\dots,Y_{j-2}]$ is  independent of $h$. \textup{(}In particular, $(\exp M)_{1j}$ is polynomial in $h_2,\dots,h_{j}$.\textup{)} Moreover, $P^\Omega_2=0$, and for $j>2$, $P^\Omega_j$ has degree $j-1$ and is isobaric of weight $j-1$.
\end{cor}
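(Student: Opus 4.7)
The proof is essentially bookkeeping on top of the preceding lemma, together with one small non-vanishing check at the end. My plan is to expand
$$(\exp M)_{1j}=\sum_{n\geq 0} \frac{(M^n)_{1j}}{n!}$$
and read off the contributions case by case from the previous lemma. For $j\geq 2$ the term $n=0$ vanishes (identity matrix); the term $n=1$ contributes $\frac{1}{\Omega_j}h_j$; the terms with $1<n<j$ contribute $\frac{1}{n!}P^\Omega_{nj}(h_1,\ldots,h_{j-1})$, which, since $h_1=0$, is a polynomial in $h_2,\ldots,h_{j-1}$; and the terms with $n\geq j$ vanish because $M\in\tr_K^1$ forces $M^n\in\tr_K^n$. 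Collecting these yields
$$(\exp M)_{1j}=\frac{1}{\Omega_j}h_j+P^\Omega_j(h_2,\ldots,h_{j-1})$$
with the explicit definition
$$P^\Omega_j(Y_1,\ldots,Y_{j-2}):=\sum_{n=2}^{j-1}\frac{1}{n!}\,P^\Omega_{nj}(0,Y_1,\ldots,Y_{j-2}),$$
which is independent of $h$ since each $P^\Omega_{nj}$ is.

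For $j=2$ the defining sum is empty, so $P^\Omega_2=0$, as claimed. For $j>2$ the isobaric property follows at once: by the preceding lemma each $P^\Omega_{nj}$ is isobaric of weight $j-1$ in $Y_0,\ldots,Y_{j-2}$, and substituting $Y_0=0$ preserves isobaricity. Likewise, since $P^\Omega_{nj}$ is homogeneous of degree $n$, each summand has degree at most $n$, so $\deg P^\Omega_j\leq j-1$.

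The only non-routine point is showing the degree bound is attained, which I would settle by returning to formula \eqref{eq:Mn}. For $n=j-1$ and $i=1$, the index constraint $k_1+\cdots+k_{j-1}=j-1$ together with $k_\ell\geq 1$ (forced once we discard terms containing the factor $h_1=0$) admits only the solution $k_1=\cdots=k_{j-1}=1$. This produces the single surviving monomial
$$\frac{\Omega_1}{\Omega_j}\,1\cdot 2\cdots(j-1)\, h_2^{j-1}=\frac{(j-1)!}{\Omega_j}\,h_2^{j-1}$$
in $(M^{j-1})_{1j}$, so the top-degree piece of $P^\Omega_j$ is the non-zero monomial $\frac{1}{\Omega_j}Y_1^{j-1}$. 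This establishes $\deg P^\Omega_j=j-1$ and completes the proof. I do not foresee any real obstacle beyond this small non-vanishing verification.
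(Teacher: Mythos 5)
Your proof is correct and takes essentially the same route as the paper: expand $\exp M$, observe that only $1\leq n\leq j-1$ contribute, and define $P^\Omega_j:=\sum_{n=2}^{j-1}\frac{1}{n!}P^\Omega_{nj}(0,Y_1,\dots,Y_{j-2})$. The paper's proof simply asserts that this $P^\Omega_j$ "has the right properties," whereas you supply the one genuinely non-routine verification, namely that the degree is exactly $j-1$ and not less: your computation from \eqref{eq:Mn} that the $n=j-1$ term contributes the sole top-degree monomial $\frac{1}{\Omega_j}Y_1^{j-1}$ is correct (with $i=1$, $h_1=0$ forces $k_1=\cdots=k_{j-1}=1$, giving coefficient $\frac{\Omega_1}{\Omega_j}(j-1)!=\frac{(j-1)!}{\Omega_j}$). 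This is a worthwhile addition that the paper glosses over.
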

\begin{proof}
By the previous lemma we have
$$(\exp M)_{1j} = \sum_{n=1}^{j-1} \frac{1}{n!} (M^n)_{1j} = \frac{1}{\Omega_j}h_{j} + \sum_{n=2}^{j-1} \frac{1}{n!}P^\Omega_{nj}(h_1,\dots,h_{j-1}).$$
Hence $$P^\Omega_j(Y_1,\dots,Y_{j-2}):=\sum_{n=2}^{j-1} \frac{1}{n!}P^\Omega_{nj}(0,Y_1,\dots,Y_{j-2})$$ has the right properties.
\end{proof}

Theorem~\ref{thm:Schippers} now follows immediately from Corollary~\ref{cor:Schippers} and the following:

\begin{prop}
Let $f\in zK[[z]]$ be unitary, $n=\itval(f)$. Then $\log\, [f]^\Omega\in\frak m_K^n$. 
\end{prop}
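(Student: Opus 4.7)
The plan is to construct $h \in z^{n+1}K[[z]]$ such that $\exp\langle h\rangle^\Omega = [f]^\Omega$; this will immediately give $\log[f]^\Omega = \langle h\rangle^\Omega \in \mathfrak{m}_K^n$, as desired. The construction is inductive, reading off the coefficients of $h$ from the first row of $[f]^\Omega$, using the corollary just preceding the proposition as the key inversion formula.

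First, I would set $h_1 := 0$, so that $\langle h\rangle^\Omega \in \tr^1_K$ and hence $(\exp\langle h\rangle^\Omega)_{11} = 1 = f_1$. That corollary then states that, for any $h \in z^2K[[z]]$ and each $j \geq 2$,
$$(\exp\langle h\rangle^\Omega)_{1j} = \frac{1}{\Omega_j}\,h_j + P_j^\Omega(h_2,\dots,h_{j-1}),$$
where $P_j^\Omega$ is a fixed polynomial (independent of $h$) with $P_2^\Omega = 0$ and, for $j>2$, $P_j^\Omega$ homogeneous of degree $j-1$. Writing $f = \sum_j f_j z^j$, I would then define
$$h_j := f_j - \Omega_j\, P_j^\Omega(h_2, \dots, h_{j-1}) \qquad (j \geq 2)$$
recursively, so that by construction $\Omega_j (\exp\langle h\rangle^\Omega)_{1j} = f_j$ for every $j \geq 1$.

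Next I would verify by induction on $j$ that $h_j = 0$ for $2 \leq j \leq n$. The hypothesis $\itval(f) = n$ gives $f_2 = \cdots = f_n = 0$. The base case $j=2$ is immediate from $P_2^\Omega = 0$. For the inductive step with $2 < j \leq n$, the homogeneity of $P_j^\Omega$ of positive degree forces $P_j^\Omega(0,\dots,0) = 0$, whence $h_j = f_j - 0 = 0$. Thus $h \in z^{n+1}K[[z]]$, and in particular $\langle h\rangle^\Omega \in \mathfrak{m}_K^n$.

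Finally, applying Corollary~\ref{cor:Schippers} to this $h$ (with the role of $n$ there played by $1$, since $h \in z^2K[[z]]$), we obtain $\exp\langle h\rangle^\Omega = [\tilde f\,]^\Omega$, where $\tilde f := \sum_{j \geq 1} (\exp\langle h\rangle^\Omega)_{1j}\, \Omega_j\, z^j$. By construction $\tilde f = f$, so $\exp\langle h\rangle^\Omega = [f]^\Omega$ and therefore $\log[f]^\Omega = \langle h\rangle^\Omega \in \mathfrak{m}_K^n$. The main subtlety is ensuring that the recursive construction respects the iterative valuation of $h$, which hinges on $P_j^\Omega$ having no constant term — a consequence of its being homogeneous of positive degree whenever $j>2$, and of being identically zero when $j=2$.
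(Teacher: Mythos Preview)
Your proof is correct and follows the same route as the paper's: recursively define $h_j$ so that the first row of $\exp\langle h\rangle^\Omega$ matches that of $[f]^\Omega$, then invoke Corollary~\ref{cor:Schippers}. One small slip worth noting: the corollary only asserts that $P_j^\Omega$ has degree $j-1$ and is \emph{isobaric} of weight $j-1$ (it is not homogeneous in general, being a sum $\sum_{n=2}^{j-1}\frac{1}{n!}P_{nj}^\Omega(0,Y_1,\dots,Y_{j-2})$ of homogeneous pieces of different degrees), but your needed conclusion $P_j^\Omega(0,\dots,0)=0$ still follows, since each summand has positive degree.
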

\begin{proof}
We define a sequence $(h_j)_{j\geq 1}$ recursively as follows: set $h_1:=0$, and assuming inductively that $h_2,\dots,h_{j}$ have been defined already, where $j>0$, let $h_{j+1}:=(f_{j+1}-P^\Omega_{j+1}(h_2,\dots,h_{j}))\Omega_{j+1}$. Let $h:=\sum_{j\geq 1} h_jz^{j}\in z^{n+1}K[[z]]$ and $M:=\langle h\rangle^{\Omega}$. Then by the corollary above,
we have $(\exp M)_{1j}=f_j$ for every $j$.
Corollary~\ref{cor:Schippers} now yields $\exp M = [f]^\Omega$ and hence $\log\,[f]^\Omega=M=\langle h\rangle^{\Omega}\in \frak m_K^n$.
\end{proof}

\begin{remark}
The mistake in \cite[Lemma~3.10]{Schippers} pointed out in the example following the proof of Lemma~\ref{lem:3.10 corrected} affects the statements of items 3.14 and 3.15 and the proofs of 3.13--3.17  in loc.~cit.~(which concern the shape of $\log\, [f]$ for non-unitary $f\in z\C[[z]]$); however, based on the correct formula in Lemma~\ref{lem:3.10 corrected} above, it is routine to make the necessary changes. For example, the corrected version of \cite[Corollary~3.14]{Schippers} states that (using our notation) for $h\in z\C[[z]]$ and $j\geq 2$ we have
$$[\exp\langle h\rangle^{\mathbf 1}]_{1j} = \frac{h_{j}}{j-1}\left(\frac{e^{jh_1}-e^{h_1}}{h_1}\right)+\Phi_j(h_1,\dots,h_{j-1})$$
where $\Phi_j$ is an entire function $\C^{j-1}\to\C$.
\end{remark}

\section{The Iterative Logarithm}\label{sec:itlog}

\noindent
In this section we let $K$ be an integral domain which contains $\Q$ as a subring, and $\Omega$ be a reference sequence.  
Let $f\in zK[[z]]$ be unitary.
By Theorem~\ref{thm:Schippers} there exists a (unique) power series $h\in z^2K[[z]]$ such that $\log\,[f]^\Omega=\langle h\rangle^\Omega$. 
The identities \eqref{eq:conjugation logarithm}, \eqref{eq:convert iteration matrices, 2} and \eqref{eq:conjugate h} show that $h$ does not depend on $\Omega$. 
Indeed, we have
\begin{multline*}
h = \sum_{n\geq 1} \frac{(-1)^{n-1}}{n}h[n]\\ \text{where $h[0]=z$ and $h[n+1]=h[n]\circ f-h[n]\in z^{n+1}K[[z]]$ for every $n$.}
\end{multline*}
As in \cite{Ecalle2},
we call the power series $h$  the {\bf iterative logarithm of $f$}, and we denote it by $h=\itlog(f)$ or $h=f_*$. In the following we let $s$, $t$ be new distinct indeterminates, and we write 
$$f^{[t]} = \sum_{j\geq 1}\, (\exp\,t\langle f_*\rangle^\Omega)_{1j}\Omega_j\, z^j \in z+z^{n+1}K[t][[z]], \qquad n=\itval(f).$$
Note that $f^{[t]}$ does not depend on the choice of reference sequence $\Omega$.
For an element $a$ of a ring extension $K^*$ of $K$ let 
$$f^{[a]}:=f^{[t]}\big|_{t=a}\in z+z^{n+1}K^*[[z]],$$
so $f^{[0]}=z$ and $f^{[1]}=f$.
The notations $f^{[t]}$ and $f^{[a]}$ do not conflict with the ones introduced in Proposition~\ref{prop:EJ}:
by \eqref{eq:exp for commuting matrices} and \eqref{eq:Loewner, 2} (in Proposition~\ref{prop:Schippers}) we have
$$[f^{[s+t]}]^\Omega = \exp\,(s+t)\langle h\rangle^\Omega = \exp\,s\langle h\rangle^\Omega \cdot \exp\,t\langle h\rangle^\Omega=[f^{[s]}]^\Omega\cdot [f^{[t]}]^\Omega=[f^{[s]}\circ f^{[t]}]^\Omega$$
and hence 
\begin{equation}\label{eq:fractional iterates}
f^{[s+t]} = f^{[s]}\circ f^{[t]}
\end{equation}
in $K[s,t][[z]]$. Equation \eqref{eq:Loewner} also yields
$$\itlog(f)=\left.\frac{\partial f^{[t]}}{\partial t}\right\lvert_{t=0}.$$
 If $a\in K$ then $(f^{[a]})^{[t]}=f^{[at]}$ by the uniqueness statement in Proposition~\ref{prop:EJ} and hence
\begin{equation}\label{eq:itlog powers}
\itlog(f^{[a]})=a\itlog(f)\qquad\text{for all $a\in K$.}
\end{equation}
Acz\'el \cite{Aczel} and Jabotinsky \cite{J1} also showed that the iterative logarithm satisfies a functional equation (although \cite{Gronau} suggests that Frege had already been aware of this equation much earlier):

\begin{prop}[Acz\'el and Jabotinsky]\label{prop:Jab}
\begin{equation}\label{eq:Jab1}
f_* \cdot \frac{\partial f^{[t]}}{\partial z}= \frac{\partial f^{[t]}}{\partial t} = f_*\circ f^{[t]}
\end{equation}
and hence
\begin{equation}\label{eq:Jab2}
f_*\cdot \frac{df}{dz} = f_*\circ f.
\end{equation}
\end{prop}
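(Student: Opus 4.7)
The first equality in \eqref{eq:Jab1} is essentially a restatement of the formal Loewner PDE from Proposition~\ref{prop:Schippers}. My plan is to apply that proposition with $h := f_*$: since $f_* = \itlog(f) \in z^{n+1}K[[z]]$ for $n = \itval(f)$, and the power series $f_t$ constructed there coincides with our $f^{[t]}$ (both being determined by the relation $[f_t]^\Omega = \exp(t\langle f_*\rangle^\Omega)$), Proposition~\ref{prop:Schippers} gives directly
$$\frac{\partial f^{[t]}}{\partial t} \;=\; \frac{\partial f^{[t]}}{\partial z}\cdot f_* \;=\; f_*\cdot \frac{\partial f^{[t]}}{\partial z},$$
the second equality being commutativity of multiplication in $K[t][[z]]$. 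The degenerate case $f = z$ (which falls outside the hypothesis of Proposition~\ref{prop:Schippers}) is trivial, since then $f_* = 0$ and $f^{[t]} = z$.

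For the second equality in \eqref{eq:Jab1}, I would differentiate both sides of the group law \eqref{eq:fractional iterates} with respect to $s$ and then specialize to $s = 0$. The left-hand side becomes $\partial f^{[t]}/\partial t$. For the right-hand side, I write $f^{[s]} = \sum_{j\geq 1} a_j(s)\, z^j$ with $a_j(s) \in K[s]$, so that $f^{[s]} \circ f^{[t]} = \sum_{j\geq 1} a_j(s)\, (f^{[t]})^j$; term-by-term differentiation with respect to $s$ followed by $s = 0$ yields $\sum_j a_j'(0)\, (f^{[t]})^j = \bigl(\partial f^{[s]}/\partial s\bigr\vert_{s=0} \bigr) \circ f^{[t]}$. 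By the identity $\itlog(f) = \partial f^{[t]}/\partial t\vert_{t=0}$ recorded just before the proposition, this equals $f_* \circ f^{[t]}$, as desired.

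Finally, \eqref{eq:Jab2} follows at once by specializing $t = 1$ in \eqref{eq:Jab1} and using $f^{[1]} = f$. No genuine obstacle arises here; the only point requiring a small amount of care is the routine coefficient-wise interchange of $\partial/\partial s$ with the composition operation on formal power series, but since $s$ enters only as a scalar parameter in the coefficients $a_j(s)$, this verification is entirely mechanical.
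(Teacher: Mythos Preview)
Your proposal is correct and follows essentially the same approach as the paper: the first equality is identified with the Loewner equation \eqref{eq:Loewner} from Proposition~\ref{prop:Schippers}, and the second is obtained by differentiating the group law \eqref{eq:fractional iterates} in $s$ and setting $s=0$. Your treatment is slightly more explicit (writing out the coefficient expansion for the composition and separately noting the degenerate case $f=z$), but the argument is the same.
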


The equation \eqref{eq:Jab2} is known as {\it Julia's equation} in iteration theory. (See \cite[Section~8.5A]{KCG}.)
The first equation in \eqref{eq:Jab1} is simply \eqref{eq:Loewner}.
To show the second equation $\frac{\partial f^{[t]}}{\partial t} = f_*\circ f^{[t]}$, simply
differentiate \eqref{eq:fractional iterates} with respect to $s$:
$$\left.\frac{\partial f^{[u]}}{\partial u}\right|_{u=s+t}=\left.\frac{\partial f^{[u]}}{\partial u}\right|_{u=s+t}\cdot \frac{\partial(s+t)}{\partial s}=\frac{\partial f^{[s+t]}}{\partial s}=\frac{\partial (f^{[s]}\circ f^{[t]})}{\partial s} = \frac{\partial f^{[s]}}{\partial s}\circ f^{[t]}.$$
Setting $s=0$ yields the desired result.

\medskip

Suppose now that $K=\C$. Even if $f$ is convergent,  for given $a\in\C$ the formal power series $f^{[a]}$ is not necessarily convergent. In fact, by remarkable results of Baker \cite{Baker}, \'Ecalle \cite{Ecalle} and Liverpool \cite{Liverpool}, there are only three possibilities:
\begin{enumerate}
\item $f^{[a]}$ has radius of convergence $0$ for all $a\in\C$, $a\neq 0$;
\item there is some non-zero $a_1\in\C$ such that $f^{[a]}$ has positive radius of convergence if and only if $a$ is an integer multiple of $a_1$; or
\item $f^{[a]}$ has positive radius of convergence for all $a\in\C$.
\end{enumerate}
If (3) holds, then one calls $f$ {\bf embeddable} (in a continuous group of analytic iterates of $f$). This is a very rare circumstance; for example, Baker \cite{Baker-64} and Szekeres \cite{Szekeres} showed that if $f$ is the Taylor series at $0$ of a meromorphic function on the whole complex plane which is regular at $0$, then $f$ is not embeddable except in the case where
$$f=\frac{z}{1-cz} \qquad (c\in\C).$$
In this case, $\itlog(f)=cz^2$ by Example~\ref{ex:embeddable} and \eqref{eq:itlog powers}. 
Erd{\H{o}}s and Jabotinsky \cite{EJ} showed that in general, $f$ is embeddable if and only if $f_*=\itlog(f)$ has a positive radius of convergence. 
(See also \cite[Theorem~9.15]{Kuczma} or \cite{Scheinberg} for an exposition.)
As a consequence, very rarely does $f_*$ have a positive radius of convergence. (However,  \'Ecalle \cite{Ecalle3} has shown that $f_*$ is always Borel summable.) In particular, we obtain a negative answer to
the question posed in \cite[Question~4.3]{Schippers}: {\it if $f$ is convergent, is $f_*$ convergent?} Contrary to what is conjectured in \cite{Schippers},
the converse question (Question~4.1 in \cite{Schippers}), however, is seen to have a positive answer: {\it if $f_*$ is convergent, then $f$ is convergent.} 

\medskip

In the next section we  discuss when iterative logarithms satisfy algebraic differential equations.

\section{Differential Transcendence of Iterative Logarithms}\label{sec:difftr}

\noindent
Before we state the main result of this section, we introduce basic terminology concerning differential rings and differential polynomials.

\subsection*{Differential rings}
Let $R$ be a {\bf differential ring}, that is, a commutative ring $R$ equipped with a derivation $\der$ of $R$. We also write $y'$ instead of $\der(y)$ and similarly $y^{(n)}$ instead of $\der^{n}(y)$, where $\der^{n}$ is the $n$th iterate of $\der$. The set $C_R:=\{y\in R:y'=0\}$ is a subring of $R$, called the ring of constants of $R$.
A subring of  $R$ which is closed under $\der$ is called a {\bf differential subring} of $R$. 
If $R$ is a differential subring of a differential ring $\tilde R$ and $y\in \tilde R$, the smallest differential subring of $\tilde R$ containing $R\cup\{y\}$ is the subring $R\{y\}:=R[y,y',y'',\dots]$ of $\tilde R$ generated by $R$ and all the derivatives $y^{(n)}$ of $y$.
A differential field is a differential ring whose underlying ring happens to be a field. The ring of constants of a differential field $F$ is a subfield of $F$.
The derivation of a differential ring whose underlying ring is an integral domain extends uniquely to a derivation of its fraction field, and we always consider the derivation extended in this way. 
If $R$ is a differential subring of a differential field $F$ and $y\in F^\times$, then $R_y:=\{a/y^n:a\in R,\ n\geq 0\}$ is a differential subring of $F$.

\subsection*{Differential polynomials}
Let $Y$ be a
differential indeterminate over the differential ring $R$. Then $R\{Y\}$ denotes the ring of
differential polynomials in $Y$ over $R$. As ring, $R\{Y\}$
is just the polynomial ring $R[Y,Y',Y'',\dots]$ in the distinct
 indeterminates
$Y^{(n)}$ over $R$, where as usual we write $Y=Y^{(0)}$, $Y'=Y^{(1)}$, $Y''=Y^{(2)}$. 
We consider
$R\{Y\}$ as the differential ring whose derivation,
extending the derivation of $R$ and also denoted by
$\der$, is given by $\der(Y^{(n)})=Y^{(n+1)}$
for every $n$. 
For $P(Y)\in R\{Y\}$ and $y$ an element of a differential ring containing $R$ as a differential subring, we let $P(y)$ be the element of that extension obtained
by substituting $y,y',\dots$ for $Y,Y',\dots$ in $P$,
respectively. 
We call an equation of the form
$$P(Y)=0\qquad\text{(where $P\in R\{Y\}$, $P\neq 0$)}$$ 
an {\bf algebraic differential equation} (ADE) over $R$, and a solution of such an ADE is an element $y$ of a differential ring extension of $R$ with $P(y)=0$.
We say that an element $y$ of a differential ring extension of $R$
is {\bf differentially algebraic over $R$}  if $y$ is the solution of an ADE over $R$, and if $y$ is not differentially algebraic over $R$, then $y$ is said to be {\bf differentially transcendental over $R$.}
Clearly to be algebraic over $R$ means in particular to be differentially algebraic over $R$.

Being differentially algebraic is transitive; this well-known fact  follows from basic properties of transcendence degree of field extensions:

\begin{lemma}\label{lem:transitivity of DA}
Let $F$ be a differential field and let $R$ be a differential subring of $F$. If $f\in F$ is differentially algebraic over $R$ and
$g\in F$ is differentially algebraic over $R\{f\}$, then $g$ is differentially algebraic over $R$.
\end{lemma}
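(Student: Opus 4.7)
The plan is to translate differential algebraicity into a statement about (ordinary) transcendence degree, at which point the lemma reduces to the well-known additivity of transcendence degree in towers of fields.

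First I would pass to fraction fields: let $K=\Frac(R)$ and let $K\langle f\rangle$ denote the differential subfield of $F$ generated by $K\cup\{f\}$, namely $\Frac(K\{f\})$. The key auxiliary claim is that an element $y\in F$ is differentially algebraic over $R$ (equivalently, over $K$) if and only if the transcendence degree of $K\langle y\rangle$ over $K$ is finite. For the nontrivial direction, suppose $y$ satisfies an ADE $P(y)=0$ over $R$ of order $n$, i.e., $P\in R\{Y\}$ has $Y^{(n)}$ as its highest occurring derivative. Pick $P$ to be of minimal total degree in $Y^{(n)}$, and let $Q:=\partial P/\partial Y^{(n)}\in R\{Y\}$; minimality guarantees $Q(y)\neq 0$. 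Differentiating $P(y)=0$ gives
\[
Q(y)\cdot y^{(n+1)}+P_1(y,y',\dots,y^{(n)})=0
\]
for some $P_1\in R\{Y\}$ of order $\leq n$, so $y^{(n+1)}\in K(y,y',\dots,y^{(n)})$. Iterating, every $y^{(m)}$ with $m\geq n$ is algebraic (in fact rational) over $K(y,y',\dots,y^{(n-1)})$, so $K\langle y\rangle$ has transcendence degree at most $n$ over $K$. Conversely, if the transcendence degree is finite, then $y,y',y'',\dots$ cannot all be algebraically independent over $K$, yielding an algebraic relation which, after clearing denominators from $K$ to $R$, is exactly an ADE over $R$.

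With this criterion in hand, the lemma becomes routine. Applying the criterion to the hypothesis that $f$ is differentially algebraic over $R$ gives $\operatorname{tr.deg}(K\langle f\rangle/K)<\infty$, and applying it to $g$ being differentially algebraic over $R\{f\}$ (equivalently, over the differential field $K\langle f\rangle$, since $\Frac(R\{f\})=K\langle f\rangle$) gives $\operatorname{tr.deg}(K\langle f\rangle\langle g\rangle/K\langle f\rangle)<\infty$. By additivity of transcendence degree in the tower $K\subseteq K\langle f\rangle\subseteq K\langle f,g\rangle$, we conclude $\operatorname{tr.deg}(K\langle f,g\rangle/K)<\infty$. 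In particular $\operatorname{tr.deg}(K\langle g\rangle/K)<\infty$, so by the criterion again $g$ is differentially algebraic over $K$, hence (after clearing denominators) over $R$.

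The only real subtlety lies in the auxiliary criterion, specifically the step showing that a single ADE of order $n$ forces all higher derivatives to lie in a finitely generated field extension of $K$. Here one must be careful to choose $P$ of minimal degree in the top derivative so that the separant $Q=\partial P/\partial Y^{(n)}$ does not vanish at $y$; otherwise the implicit-function-style computation breaks down. Once this is handled, the rest of the argument is purely formal, and no additional input from differential algebra beyond the Leibniz rule and ordinary transcendence-degree additivity is needed.
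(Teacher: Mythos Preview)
Your proof is correct and follows exactly the approach the paper indicates: the paper does not actually prove this lemma but states that it ``follows from basic properties of transcendence degree of field extensions,'' which is precisely the reduction you carry out. One small imprecision: when you pick $P$ ``of minimal total degree in $Y^{(n)}$,'' you should really pick $P$ of minimal rank (minimal order, then minimal degree in the top derivative) to guarantee the separant $Q(y)\neq 0$; with that adjustment the argument is clean.
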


\subsection*{Differential transcendence of iterative logarithms}
Let now $K$ be an integral domain containing $\Q$ as a subring, and let $z$ be an indeterminate over $K$.  We view $K[[z]]$ as a differential ring with the derivation $\frac{d}{dz}$. The ring of constants of $K[[z]]$ is $K$. We simply say that $f\in K[[z]]$ is {\bf differentially algebraic} or {\bf differentially transcendental} if $f$ is differentially algebraic  respectively differentially transcendental over $K[z]$. 
If $f\in K[[z]]$ is differentially algebraic, then $f$ is actually differentially algebraic over $K$, by Lemma~\ref{lem:transitivity of DA}.

As above, we let $t$ be a new indeterminate over $K$, and $K^*=K[t]$.
The goal of this section is to show:

\begin{theorem}\label{thm:BR}
Let $f\in zK[[z]]$ be unitary. Then $f_*\in z^2K[[z]]$ is differentially algebraic if and only if $f^{[t]}\in zK^*[[z]]$ is differentially algebraic, if and only if $f^{[t]}$ is differentially algebraic over $K^*$.
\end{theorem}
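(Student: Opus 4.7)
The strategy is to prove the cycle (C)$\Rightarrow$(B)$\Rightarrow$(A)$\Rightarrow$(C). The implication (C)$\Rightarrow$(B) is immediate, since any ADE over $K^*$ is a fortiori an ADE over $K^*[z]$. For the remaining two implications, the essential tool is the Jabotinsky/Julia functional equation of Proposition~\ref{prop:Jab}: $f_*(z)\cdot \partial_z f^{[t]} = f_* \circ f^{[t]}$, which links $z$-derivatives of $\phi := f^{[t]}$ with values of $f_*$ and its derivatives at both $z$ and $\phi$.

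For (A)$\Rightarrow$(C), I will argue by a transcendence-degree count. Using Julia's equation and induction on $k$, each $\phi_{z^k}$ is a rational function over $K(z,\phi)$ in the quantities $f_*^{(j)}(z)$ and $f_*^{(j)}(\phi)$ for $0\le j<k$ (the base case is $\phi_z = f_*(\phi)/f_*(z)$). Hypothesis~(A) ensures the differential field $K(z)(f_*, f_*', f_*'',\ldots)$ has finite transcendence degree $d$ over $K(z)$. Because $\phi(0,t)=0$, the substitution $z\mapsto\phi$ defines an injective ring homomorphism $K[[z]]\hookrightarrow K[t][[z]]$, which induces an isomorphism $K(z)(f_*, f_*',\ldots) \cong K(\phi)(f_*(\phi), f_*'(\phi),\ldots)$ of abstract fields, so the latter also has transcendence degree $d$ over $K(\phi)$. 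Combining, the composite field $K(t, z, \phi, f_*, f_*',\ldots, f_*(\phi), f_*'(\phi),\ldots)$ has transcendence degree at most $2d+2$ over $K(t)$, and it contains every $\phi_{z^k}$. Therefore $\phi, \phi_z, \phi_{zz},\ldots$ are algebraically dependent over $K(t) = \Frac(K^*)$, yielding~(C).

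For (B)$\Rightarrow$(A), the plan is to extract a non-trivial ADE for $f_*$ from one for $\phi$ via a weighted Taylor expansion in $t$. Expanding in $K[[z]][[t]]$ write $\phi = z + t f_* + t^2 h_2 + \cdots$, where each $h_l\in K[[z]]$ is a differential polynomial in $f_*$ determined recursively by $\phi_t = f_*(\phi)$; then each $u_k := \phi_{z^k} - \phi_{z^k}\bigr|_{t=0}$ has the form $u_k = t\, f_*^{(k)} + O(t^2)$. Given a non-zero $P\in K[t,z][Y_0,\ldots,Y_n]$ with $P(z,t,\phi,\phi_z,\ldots,\phi_{z^n})=0$, shift $Y_0\mapsto Y_0+z$ and $Y_1\mapsto Y_1+1$ to obtain a non-zero $\tilde P\in K[z][t,Y_0,Y_1,\ldots]$ and expand as a monomial sum
$$\tilde P = \sum_{\alpha} A_\alpha(z)\, t^{\alpha_t}\, Y_0^{\alpha_0}\, Y_1^{\alpha_1}\cdots,\qquad A_\alpha(z)\in K[z],$$
so that $\tilde P(z,t,u_0,u_1,u_2,\ldots)=0$ in $K[t][[z]]$. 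Assign the weight $|\alpha|:=\alpha_t+\sum_{k\ge 0}\alpha_k$ and let $l_0:=\min\{|\alpha|:A_\alpha\neq 0\}$, which is well-defined since $\tilde P\neq 0$. The $\alpha$-summand contributes to $t^l$ only for $l\ge|\alpha|$; at $l=l_0$, only summands with $|\alpha|=l_0$ contribute, at their leading-$t$ order, giving
$$\sum_{|\alpha|=l_0} A_\alpha(z)\,\prod_{k\ge 0}\bigl(f_*^{(k)}(z)\bigr)^{\alpha_k} \;=\; 0\qquad\text{in }K[[z]].$$
At fixed weight $l_0$ the spatial tuple $(\alpha_0,\alpha_1,\ldots)$ forces $\alpha_t = l_0-\sum_k\alpha_k$, so distinct spatial tuples index distinct monomials in the $f_*^{(k)}$; the displayed identity is therefore a non-trivial polynomial relation in $f_*, f_*', f_*'',\ldots$ with coefficients in $K[z]$, hence an ADE for $f_*$, proving~(A).

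The main obstacle is the non-triviality step in (B)$\Rightarrow$(A): one must guarantee that the equation extracted from the minimum-weight $t^{l_0}$-coefficient is not the zero polynomial in the $f_*^{(k)}$. This is secured by the precise leading form $u_k = t\, f_*^{(k)} + O(t^2)$, which couples the weight $|\alpha|$ in $t$ with monomials in the $f_*^{(k)}$, so that at weight $l_0$ no higher-order contributions of summands with $|\alpha|<l_0$ can interfere (since there are none) and different spatial tuples index distinct monomials, ruling out cancellation.
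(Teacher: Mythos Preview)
Your proof is correct, but both non-trivial implications take a different route from the paper's.

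For (A)$\Rightarrow$(C), the paper works constructively: it develops explicit ``transformation formulas'' (the triangular matrices $G$, $B$, $H$ built from differential polynomials in $X$) encoding how $h^{(n)}\circ f$ is expressed in the derivatives of $h\circ f$ and of $f$. Composing a given ADE $P(f_*)=0$ with $\phi=f^{[t]}$ and applying these formulas yields an explicit non-zero differential polynomial $Q\in K\{f_*\}\{X\}$ with $Q(\phi)=0$. Your transcendence-degree argument, by contrast, is non-constructive but considerably shorter: once one observes from Julia's equation that every $\phi_{z^k}$ lies in the field generated over $K(t)$ by $z$, $\phi$, and the $f_*^{(j)}$ evaluated at $z$ and at $\phi$, finiteness of transcendence degree forces algebraic dependence. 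This bypasses the entire ``Transformation formulas'' subsection.

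For (B)$\Rightarrow$(A), the paper instead proves (C)$\Rightarrow$(A) by taking a minimal-rank ADE $P(\phi)=0$ over $K^*$, differentiating with respect to $t$, and using the \emph{first} form of Jabotinsky's equation $\partial_t\phi = f_*\cdot\phi_z$ to obtain a non-trivial \emph{linear} differential equation for $f_*$ over $K^*\{\phi\}$; transitivity and specialization of $t$ then finish. Your weighted-$t$-expansion argument uses instead the leading-order information $\phi = z + t f_* + O(t^2)$ (equivalently $\partial_t\phi|_{t=0}=f_*$), extracts the lowest $t$-weight part of the shifted ADE, and reads off a non-trivial differential polynomial in $f_*$ directly over $K[z]$. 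This avoids the minimal-rank argument and the specialization step, at the cost of the bookkeeping with the weight $|\alpha|$.

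In short, your argument is more conceptual and self-contained (no auxiliary transformation matrices, no minimal-rank trick), while the paper's is more explicit and gives, for instance, a concrete linear ODE satisfied by $f_*$ over $K^*\{\phi\}$.
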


Before we give the proof, we introduce some more terminology concerning differential polynomials, and we make a few observations about how the derivation $\frac{d}{dz}$ of $K[[z]]$ and composition in $K[[z]]$ interact with each other, in particular in connection with solutions of Julia's equation. 
 
\subsection*{More terminology about differential polynomials}
Let $R$ be a differential ring and $P\in R\{Y\}$. The smallest $r\in\N$ such that 
$P\in R[Y,Y',\dots,Y^{(r)}]$ is called the
{\bf order} of the differential polynomial $P$. 
Given a non-zero $P\in R\{Y\}$ we define its {\bf rank} to be the pair $(r,d)\in\N^2$ where $r=\order(P)$ and $d$ is the degree of $P$ in the indeterminate $Y^{(r)}$. In this context we order $\N^2$ lexicographically.

For any 
$(r+1)$-tuple $\bi =(i_0,\dots,i_r)$ of natural
numbers and $Q\in R\{Y\}$, put 
$$Q^{\bi} := Q^{i_0}(Q')^{i_1}\cdots (Q^{(r)})^{i_r}.$$
In particular, $Y^{\bi} = Y^{i_0}(Y')^{i_1}\cdots (Y^{(r)})^{i_r}$, and
$y^{\bi} = y^{i_0}(y')^{i_1}\cdots (y^{(r)})^{i_r}$ for $y\in R$.

Let $P\in R\{Y\}$ have order $r$, and let $\bi=(i_0,\dots,i_r)$ range over $\N^{1+r}$.
We denote by $P_{\bi}\in R$ the coefficient of $Y^{\bi}$ in $P$; then
$$P(Y)=\sum_{\bi} P_{\bi}\,Y^{\bi}.$$
We also define the support of $P$ as
$$\supp P := \big\{\bi : P_{\bi}\neq 0\big\}.$$
We set
$$\abs{\bi}:=i_0+\cdots+i_r,\qquad \dabs{\bi}:=i_1+2i_2+\cdots+ri_r.$$
For non-zero $P\in R\{Y\}$ we call 
$$\deg(P)=\max_{\bi\in\supp P} \abs{\bi},\qquad \wt(P)=\max_{\bi\in\supp P} \dabs{\bi}$$
the {\bf degree} of $P$ respectively {\bf weight} of $P$. We say that $P$ is {\bf homogeneous} if $\abs{\bi}=\deg(P)$ for every $\bi\in\supp P$ and {\bf isobaric} if $\dabs{\bi}=\wt(P)$ for every $\bi\in\supp P$. 

\subsection*{Transformation formulas}
Let $X$ be a differential indeterminate over $K[[z]]$.
An easy induction on $n$ shows that for each $n>0$ there are differential polynomials $G_{mn}\in\Z\{X\}$ \textup{(}$1\leq m\leq n$\textup{)} such that for all 
$f\in zK[[z]]$ and $h\in K[[z]]$ we have
$$(h^{(n)}\circ f)\cdot (f')^{2n-1}=G_{1n}(f)\,(h\circ f)'+G_{2n}(f)\,(h\circ f)''+\cdots+G_{nn}(f)\, (h\circ f)^{(n)}.$$
Moreover, $G_{mn}$ has order $n-m+1$, and is homogeneous of degree $n-1$ and isobaric of weight $2n-m-1$.
Set $G_{mn}:=0$ if $m>n$ or $m=0<n$, and $G_{00}:=(X')^{-1}\in\Z\{X\}_{X'}$.
Then the $G_{mn}$ satisfy the recurrence relation
$$G_{m,n+1} = (1-2n)G_{mn}X''+(G_{mn}'+G_{m-1,n})X'\qquad (m>0).$$
Organizing the $G_{mn}$ into a triangular matrix we obtain:
\begin{equation}\label{eq:G}
G:=(G_{mn})_{m,n}=
\begin{pmatrix}
(X')^{-1} & 0   & 0     & 0   				& \cdots \\
  & 1 	& -X''  & 3(X'')^2-X'X^{(3)} & \cdots \\
  &     & X' 	& -3X'X'' 			& \cdots \\
  &     &       & (X')^2      		& \cdots \\
  &     &       &            		& \ddots 
\end{pmatrix}.
\end{equation}
Note that $G_{nn}=(X')^{n-1}$ for every $n$. Now set
$$H_{kn} = \sum_{m=k}^n {m\choose k}X^{(m-k+1)}G_{mn}\in\Z\{X\}\qquad\text{for $k=0,\dots,n$.}$$
So if we define the triangular matrix
\begin{multline*}
B:=(B_{km})=\begin{pmatrix}
X' 	& X''   	& X^{(3)}	& X^{(4)}  	& \cdots \\
  	& X'		& 2X''  		& 3X^{(3)} 	& \cdots \\
  	&     	& X' 		& 3X'' 		& \cdots \\
  	&     	&       		& X'	  		& \cdots \\
  	&     	&       		&       		& \ddots 
\end{pmatrix} \\
\text{where $B_{km}={m\choose k}X^{(m-k+1)}$ for $m\geq k$,}
\end{multline*}
then 
\begin{multline*}
H:=(H_{kn})=B\cdot G= \\ \begin{pmatrix}
1 	& X''  	& X'X^{(3)}-(X'')^2	& (X')^2X^{(4)}-4X'X''X^{(3)}+3(X'')^3   				& \cdots \\
  	& X' 	& X'X''  			& -3X'(X'')^2+2(X')^2X^{(3)}& \cdots \\
  	&     	& (X')^2 			& 0			& \cdots \\
  	&     	&       				& (X')^3      		& \cdots \\
  	&     	&       				&            		& \ddots 
\end{pmatrix}.
\end{multline*}
Each differential polynomial $H_{kn}$ has order at most $n-k+1$, and if non-zero, is homogeneous of degree $n$ and isobaric of weight $2n-k$. Note that for $n>0$, $H_{0n}$ has the form
$$H_{0n} = \sum_{m=1}^n X^{(m+1)}G_{mn} = (X')^{n-1}X^{(n+1)}+H_n \qquad\text{where $H_n\in\Z[X',\dots,X^{(n)}]$;}$$
in particular $\order(H_{0n})=n+1>\order(H_{kn})$ for $k=1,\dots,n$.

\medskip

Let now $f\in zK[[z]]$ and $h\in K[[z]]$ satisfy Julia's equation 
$$h\cdot f' = h\circ f.$$
We assume $f\neq 0$ (and hence $f'\neq 0$).
Then for every $n$:
$$(h^{(n)}\circ f)\cdot (f')^{2n-1} = H_{0n}(f)\,h+H_{1n}(f)\,h'+\cdots+H_{nn}(f)\,h^{(n)}.$$
Let $R:=K\{X\}_{X'}$, and denote
the $R$-algebra automorphism of $R\{Y\}$ with 
$$Y^{(n)}\mapsto (X')^{1-2n}\big(H_{0n}\,Y+H_{1n}\,Y'+\cdots+H_{nn}\,Y^{(n)}\big)\qquad\text{for every $n$}$$
also by $H$.
Then for every $P\in K\{Y\}$ we have
$$P(h)\circ f = H(P)\big|_{X=f,Y=h}.$$
Note that for every $i\in\N$ and $n$ we can write
\begin{multline*}
(X')^{(2n-1)i}\cdot H\big((Y^{(n)})^i\big)=(X')^{i(n-1)}Y^i(X^{(n+1)})^i+a_i\\
\text{where $a_{i}\in\Z[X',\dots,X^{(n+1)},Y,Y',\dots,Y^{(n)}]$ with $\deg_{X^{(n+1)}} a_i<i$.}
\end{multline*}
Hence given $\bi=(i_0,\dots,i_r)\in\N^{r+1}$, setting $d=\abs{\bi}$ and $w=\dabs{\bi}$, we may write
\begin{multline*}
(X')^{2w-d}\cdot H(Y^{\bi}) =  (X')^{w-d}(X')^{\mathbf i} Y^d + a_{\bi} \\
\text{where $a_{\bi}\in\Z[X',\dots,X^{(r+1)},Y,Y',\dots,Y^{(r)}]$ with $\deg_{X^{(r+1)}} a_{\bi}<i_r$.}
\end{multline*}

\subsection*{Proof of Theorem~\ref{thm:BR}}
Let $f\in zK[[z]]$ be unitary.
Suppose first that $f^{[t]}$ is differentially algebraic over $K^*$. Let  $P\in K^*\{Y\}$ be non-zero of lowest rank such that  $P(f^{[t]})=0$.
Differentiating  with respect to $t$ on both sides of this equation yields
$$P^*(f^{[t]})+\sum_{i=0}^r \frac{\partial P}{\partial Y^{(i)}}(f^{[t]})\cdot \frac{\partial (f^{[t]})^{(i)}}{\partial t}=0.$$
Here $r=\order(P)$ and
$P^*(Y)\in K^*\{Y\}$ is the differential polynomial obtained by applying $\frac{d}{dt}$ to each coefficient of the differential polynomial $P$. Now by Proposition~\ref{prop:Jab} we further have
$$\frac{\partial (f^{[t]})^{(i)}}{\partial t} = \left(\frac{\partial f^{[t]}}{\partial t}\right)^{(i)}=\big(f_*\cdot (f^{[t]})'\big)^{(i)}=\sum_{j=0}^i {j\choose i}(f^{[t]})^{(i-j+1)}f_*^{(j)}.$$
Since $\frac{\partial P}{\partial Y^{(r)}}$ has lower rank than $P$, by choice of $P$ we have $\frac{\partial P}{\partial Y^{(r)}}(f^{[t]})\neq 0$.
Hence $f_*$ satisfies a non-trivial (inhomogeneous) linear differential equation with coefficients from $K^*\{f^{[t]}\}$, and so
by Lemma~\ref{lem:transitivity of DA}, is differentially algebraic over $K^*$. Specializing $t$ to a suitable rational number in an ADE over $K^*$ satisfied by $f_*$ shows that then $f_*$ 
also satisfies an ADE over $K$, that is, $f_*$
is differentially algebraic over $K$.
 
Conversely, suppose that $f_*$ is differentially algebraic. Let $P\in K\{Y\}$ be non-zero, of some order $r$,  such that
$P(f_*)=0$.
Then $$H(P)(f^{[t]},f_*)=P(f_*)\circ f^{[t]}=0.$$
Let $d=\deg_{Y^{(r)}} P$. By the remarks in the previous subsection, for sufficiently large $N\in\N$ we have
\begin{multline*}
(X')^N \, H(P) = \sum_{\bi:i_r=d} P_{\bi}\,(X')^{N-\dabs{\bi}}Y^{\abs{\bi}}+A \\
\text{where $A\in K[X',\dots,X^{(r+1)},Y,Y',\dots,Y^{(r)}]$ with $\deg_{X^{(r+1)}}A<d$.}
\end{multline*}
For such $N$, the differential polynomial $$Q(X):=(X')^N\,H(P)\big|_{Y=f_*}\in R\{X\}$$ is non-zero, where $R=K\{f_*\}$, and satisfies $Q(f^{[t]})=0$.
Thus $f^{[t]}$ is differentially algebraic over $R$ and hence (by Lemma~\ref{lem:transitivity of DA}) over $K$, as required. \qed
 
\medskip

Let  $\mathcal F$ be a family of elements of $K[[z]]$.
Following \cite{BR} we say that $\mathcal F$ is {\bf coherent} if there is a non-zero differential polynomial $P\in K[z]\{Y\}$ such that $P(f)=0$ for every $f\in\mathcal F$.
If $\mathcal F$ is coherent, then $P$ with these properties may actually be chosen to have coefficients in $K$; see \cite[Lemma~2.1]{BR}. If $\mathcal F$ is not coherent, then we say that $\mathcal F$ is
{\bf incoherent}; 
we also say that $\mathcal F$ is {\bf totally incoherent} if every infinite subset of $\mathcal F$ is incoherent.
From the previous theorem we immediately obtain a result stated without proof in \cite{BR}:

\begin{cor}[Boshernitzan and Rubel \cite{BR}]\label{cor:BR}
Let $f\in zK[[z]]$ be unitary and let $\mathcal F:=\{f^{[0]},f^{[1]},f^{[2]},\dots\}$ be the family of iterates of $f$. Then exactly one of the following holds:
\begin{enumerate}
\item $f_*$ is differentially algebraic and $\mathcal F$ is coherent;
\item $f_*$ is differentially transcendental and $\mathcal F$ is totally incoherent.
\end{enumerate}
\end{cor}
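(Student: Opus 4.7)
The corollary splits into two implications: (a) if $f_*$ is differentially algebraic, then $\mathcal F$ is coherent; and (b) if $\mathcal F$ contains an infinite coherent subfamily, then $f_*$ is differentially algebraic. Since $f_*$ is either DA or DT, and (b) is the contrapositive of ``$f_*$ DT implies $\mathcal F$ totally incoherent'', the two together give the stated dichotomy.

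For (a), the plan is to combine Theorem \ref{thm:BR} with specialization of $t$. The proof of that theorem actually yields more than its statement advertises: starting from a nonzero $P \in K\{Y\}$ annihilating $f_*$ (which exists since $f_*$ is DA over $K$ by Lemma \ref{lem:transitivity of DA}), the $H$-transformation provides $Q(X) := (X')^N H(P)|_{Y = f_*} \in K\{f_*\}\{X\}$ with $Q(f^{[t]}) = 0$, so $f^{[t]}$ is DA over $K\{f_*\}$; a second application of Lemma \ref{lem:transitivity of DA} promotes this to the fact that $f^{[t]}$ is DA over $K$ itself. Thus there exists a nonzero $Q' \in K\{Y\}$ (with coefficients in $K$, free of both $z$ and $t$) such that $Q'(f^{[t]}) = 0$ in $K[t][[z]]$. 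Specializing $t$ to each $n \in \N$ then yields $Q'(f^{[n]}) = 0$, so $Q'$ witnesses coherence of $\mathcal F$.

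For (b), the plan is a polynomial-vanishing argument. Let $\mathcal F' = \{f^{[n_i]} : i \in \N\} \subseteq \mathcal F$ be infinite and coherent (we may assume $f \neq z$, so that the $n_i$ may be taken distinct), with witness $P \in K[z]\{Y\}$ nonzero. Then $P(f^{[t]})$ lies in $K[t][[z]]$ and each of its $z^k$-coefficients is an element of the integral domain $K[t]$. For every $i$, the specialization $t = n_i$ gives $P(f^{[n_i]}) = 0$, so each such coefficient vanishes at every $n_i$; having infinitely many roots, each is identically zero, whence $P(f^{[t]}) = 0$. Consequently $f^{[t]}$ is DA over $K[z] \subseteq K^*[z]$, and Theorem \ref{thm:BR} yields that $f_*$ is DA.

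The main subtlety lies in (a): we need an ADE for $f^{[t]}$ whose coefficients do not involve $t$, which is stronger than the conclusion recorded in the statement of Theorem \ref{thm:BR} but is precisely what its proof delivers through the transitivity Lemma \ref{lem:transitivity of DA}. Without this sharpening, specializing $t \mapsto n$ would leave $n$-dependent coefficients and would not furnish a single common ADE for all iterates.
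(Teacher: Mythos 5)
Your proof is correct and follows the paper's own route: reduce via Theorem~\ref{thm:BR} to a statement about $f^{[t]}$, handle part~(a) by specializing $t \mapsto n$ in a $t$-free ADE, and handle part~(b) by the infinitely-many-roots argument applied coefficientwise in the integral domain $K[t]$.

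One small correction to your commentary, not to the argument itself. You claim that obtaining an ADE for $f^{[t]}$ with coefficients in $K$ (rather than $K^*=K[t]$) is ``stronger than the conclusion recorded in the statement of Theorem~\ref{thm:BR}'' and requires re-deriving it from the proof via the $H$-transformation. In fact the theorem's stated conclusion already yields the $t$-free ADE: since $t$ is a constant of the derivation $d/dz$, it satisfies $Y'=0$ and so is differentially algebraic over $K$, with $K\{t\}=K[t]=K^*$; hence ``$f^{[t]}$ DA over $K^*$'' together with Lemma~\ref{lem:transitivity of DA} immediately gives ``$f^{[t]}$ DA over $K$.'' This is exactly the extra transitivity step the paper silently relies on when it calls the specialization ``obvious.'' So your instinct that a $t$-free ADE is genuinely needed before specializing is right, and worth making explicit; but there is no need to reopen the proof of Theorem~\ref{thm:BR}---its statement plus one application of Lemma~\ref{lem:transitivity of DA} suffices. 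Everything else, including the observation that distinctness of the $n_i$ uses $f\neq z$ and the reduction of ``exactly one'' to the two implications, is fine.
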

\begin{proof}
By the theorem above, it suffices to show: if $f^{[t]}$ is differentially algebraic, then $\mathcal F$ is coherent, and if $f^{[t]}$ is differentially transcendental, then $\mathcal F$ is totally incoherent. The first implication is obvious (specialize $t$ to $n$ in a given ADE for $f^{[t]}$). For the second implication, suppose $\mathcal F$ is not totally incoherent. Then there exists an infinite sequence $(n_i)$ of pairwise distinct natural numbers  such that $\{f^{[n_i]}\}$ is coherent. Let $P\in K\{Y\}$, $P\neq 0$, be such that
$P(f^{[n_i]})=0$ for every $i$. With 
$g:=P(f^{[t]})\in K^*[[z]]$
we then have $g\big|_{t=n_i}=0$ for every $i$;  thus $g=0$  (since the coefficients of $g$ are polynomials in $t$ with coefficients from the integral domain $K^*$ of characteristic $0$). This shows that $f^{[t]}$ is differentially algebraic.
\end{proof}

\section{The Iterative Logarithm of $e^z-1$}\label{sec:proof}

\noindent
In this section we apply the results obtained in Sections~\ref{sec:lie} and \ref{sec:itlog} to the unitary power series $f=e^z-1\in z\Q[[z]]$.
Recall that the iteration matrix $[e^z-1]$ of this power series is the matrix $S=(S_{ij})\in 1+\tr_\Q^1$ consisting of the Stirling numbers  $S_{ij}={j\brace i}$  of the second kind
(cf.~\eqref{eq:Stirling itmatrix}).

\subsection*{Proof of the conjecture}
We first finish the proof of the conjecture stated in Section~\ref{sec:conj}. The matrix $S$ is related to $A=(\alpha_{ij})\in\tr^1_\Q$  via the equation
$$S^+ = \exp(A),$$
or equivalently (cf.~\eqref{eq:M+}):
$$A=\log(S)^+.$$
(Recall: for a given matrix $M=(M_{ij})\in\tr_\Q$ we defined $M^+=(M_{i+1,j+1})_{i,j}\in\tr_\Q$.)
The conjecture postulates the existence of a sequence $(c_n)_{n\geq 1}$ of rational numbers such that
\begin{equation}\label{eq:conj reformulated}
\alpha_{ij} = c_{j-i+1} {j+1 \choose i} \qquad\text{for $i<j$.}
\end{equation}
This now follows easily from the results of Section~\ref{sec:lie}:

\begin{prop}
Let $h=\itlog(e^z-1)\in z^2\Q[[z]]$, write $h=\sum_{n\geq 1} h_nz^n$ where $h_n\in\Q$, and define $c_n:=n!\,h_n$ for $n\geq 1$. Then \eqref{eq:conj reformulated} holds, and
$$c_{n}=\sum_{\substack{1\leq k<n\\ 1<n_1<\cdots<n_{k-1}<n_k=n}}  \frac{(-1)^{k+1}}{k}{n_2\brace n_1} {n_3\brace n_2}\cdots {n_k\brace n_{k-1}}$$ 
for every $n\geq 1$.
\end{prop}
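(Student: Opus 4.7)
My plan is to read off both identities from the master equality $\log S = \langle h\rangle$, which is the content of Theorem~\ref{thm:Schippers} applied to $f = e^z-1$ with reference sequence $\Omega = \Phi = (1/n!)$. Combining this with the explicit shape of $\langle h\rangle$ given in Example~\ref{ex:factorials}, I obtain
\[
(\log S)_{ij} = \langle h\rangle_{ij} = \frac{j!}{(i-1)!}\,h_{j-i+1}\qquad\text{for $i\geq 1$.}
\]
Since $S^+ = \exp(A)$ together with \eqref{eq:M+} yields $A = \log(S)^+$, evaluating at $(i+1,j+1)$ and substituting $c_n = n!\,h_n$ produces $\alpha_{ij} = \frac{(j+1)!}{i!}\cdot\frac{c_{j-i+1}}{(j-i+1)!} = c_{j-i+1}\binom{j+1}{i}$, which is \eqref{eq:conj reformulated} and completes the proof of the Shadrin--Zvonkine conjecture.

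For the combinatorial formula I specialize the same identification to $i=1$, giving $c_n = n!\,h_n = (\log S)_{1,n}$, and expand the matrix logarithm as the power series
\[
\log S = \sum_{k\geq 1}\frac{(-1)^{k+1}}{k}(S-1)^k.
\]
Since $S-1$ is strictly upper triangular with $(S-1)_{ab} = {b\brace a}$ for $a<b$, the $(1,n)$-entry of $(S-1)^k$ is a sum over strictly increasing chains $1 = m_0 < m_1 < \cdots < m_k = n$ of products $\prod_{l=1}^k {m_l \brace m_{l-1}}$. The crucial simplification is ${m_1 \brace 1} = 1$, which trivializes the leftmost Stirling factor; after relabeling $m_l\mapsto n_l$ this yields $\sum_{1<n_1<\cdots<n_k=n}{n_2\brace n_1}\cdots{n_k\brace n_{k-1}}$. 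The upper bound $k<n$ follows because $(S-1)^k\in\tr_\Q^k$, forcing $((S-1)^k)_{1,n}=0$ whenever $k\geq n$. Substituting into the logarithm series produces exactly the claimed formula for $c_n$.

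I do not anticipate a serious obstacle beyond this bookkeeping; the heavy lifting has all been done in Sections~\ref{sec:lie} and~\ref{sec:itlog}. The one point worth emphasizing is that the entire argument pivots on Theorem~\ref{thm:Schippers}, which bridges the combinatorial side (the Stirling matrix $S$) to the analytic side (the iterative logarithm of $e^z-1$); once this bridge is crossed, the remaining work is genuinely routine, with the single tidying observation that ${m\brace 1}=1$ being what converts the ``raw'' path-count expansion of $((S-1)^k)_{1,n}$ into the clean form with $k$ (rather than $k+1$) Stirling factors stated in the proposition.
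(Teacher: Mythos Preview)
Your proof is correct and follows essentially the same approach as the paper: both arguments invoke Theorem~\ref{thm:Schippers} to identify $\log S=\langle h\rangle$, then read off $\alpha_{ij}$ from the explicit entries of $\langle h\rangle$ in Example~\ref{ex:factorials}, and finally obtain the combinatorial formula from $c_n=(\log S)_{1n}$. You spell out the path-sum expansion of $((S-1)^k)_{1,n}$ and the role of ${m\brace 1}=1$ more explicitly than the paper does, but the underlying argument is identical.
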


\begin{proof}
We have $\log(S)=\langle h\rangle$ by Theorem~\ref{thm:Schippers}. Hence, using the formula for $\langle h\rangle_{ij}$ from Example~\ref{ex:factorials} we obtain  for $i<j$, as required:
$$\alpha_{ij}=\langle h\rangle_{i+1,j+1} = \frac{(j+1)!}{i!}h_{j-i+1} = \frac{(j+1)!}{i!(j-i+1)!} c_{j-i+1} =  c_{j-i+1} {j+1 \choose i}$$
The displayed identity for $c_n$ follows from $c_n=\langle h\rangle_{1n}=\log(S)_{1n}$.
\end{proof}

We note that the $c_n$ may also be expressed using the Stirling numbers of the first kind, using $\langle h\rangle=-\log(S^{-1})$:
$$c_n =  \sum_{\substack{1\leq k<n\\ 1<n_1<\cdots<n_{k-1}<n_k=n}}  \frac{(-1)^{k+n-n_1}}{k}{n_2\brack n_1} {n_3\brack n_2}\cdots {n_k\brack n_{k-1}}\qquad (n\geq 1).$$ 

\subsection*{Proof of the convolution identity}
We now turn to the convolution identity \eqref{eq:C} for Stirling numbers stated in the introduction. Jabotinsky's functional equation \eqref{eq:Jab2} for $f=e^z-1$, writing again $h=f_*$, reads as follows:
$$h\circ (e^z-1)=e^z\,h.$$
Taking derivatives on both sides of this equation and dividing by $e^z$ we obtain:
\begin{equation}\label{eq:functional equation for h'}
h'\circ (e^z-1)=h+h'.
\end{equation}
Now define, for $M\in 1+\tr^1_\Q$:
$$\Lambda(M):=\sum_{n} \frac{(-1)^n}{n+1} (M-1)^n\in 1+\tr^1_\Q,$$
so
\begin{equation}\label{eq:Lambda(M)}
\Lambda(M)\cdot (M-1)=\log(M).
\end{equation}
For later use we note that then for every $j\geq 1$:
\begin{multline}\label{eq:Lambda(M) row 1}
\sum_{k=1}^j \Lambda(M)_{1k} M_{k,j+1} = \sum_{k=1}^{j+1} \Lambda(M)_{1k} (M-1)_{k,j+1} = \\ (\Lambda(M)\cdot (M-1))_{1,j+1}=\log(M)_{1,j+1},
\end{multline}
where in the last equation we used \eqref{eq:Lambda(M)}.

\medskip

Taking $M=S$ we compute
$$\log(S) = \begin{pmatrix}
0 &	0 & 0 & 0				& 0				& 0				& 0				& \cdots \\		
  & 0 & 1 & -\frac{1}{2}	& \frac{1}{2}  	& -\frac{2}{3}	& \frac{11}{12}	& \cdots \\
  &   & 0 & 3 			& -2  			& \frac{5}{2}  	& -4   			& \cdots \\
  &   &   & 0  			& 6  			& -5  			& \frac{15}{2}	& \cdots \\
  &   &   &    			& 0  			&  10 			& 10  			& \cdots \\
  &   &   &    			&    			&  0 			& -15 			& \cdots \\
  &   &   &    			&    			&    			& 0   			& \cdots \\
  &   &   &    			&    			&    			&     			& \ddots  
\end{pmatrix}$$
and
$$\Lambda(S) = \begin{pmatrix}
1 	& 0 & 0				& 0				& 0				& 0				& \cdots \\		
	& 1 & -\frac{1}{2}	& \frac{1}{2}  	& -\frac{2}{3}	& \frac{11}{12}	& \cdots \\
  	&   & 1 				& -\frac{3}{2}  	& \frac{5}{2}  	& -\frac{25}{6}  & \cdots \\
  	&   &    			& 1  			& -3  			& \frac{15}{2}	& \cdots \\
  	&   &    			&    			&  1 			& -5  			& \cdots \\
  	&   &    			&    			&    			& 1	 			& \cdots \\
  	&   &    			&    			&    			&     			& \ddots  
\end{pmatrix}$$
We observe that the first row of $\Lambda(S)$ agrees with the first row of $\log(S)$ shifted by one place to the left. (This is simply a reformulation of the formula \eqref{eq:C}.)

\begin{prop}
For every $j\geq 1$,
$$\Lambda(S)_{1j} = \log(S)_{1,j+1}.$$
\end{prop}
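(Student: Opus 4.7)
My plan is to reduce the claim to a combinatorial identity for the sequence $(c_n)$. By Theorem~\ref{thm:Schippers} applied with $f=e^z-1$ we have $\log(S)=\langle h\rangle$ where $h=\itlog(e^z-1)=\sum_{n\geq 1}h_n z^n$; combined with the formula for row $1$ of $\langle h\rangle$ from Example~\ref{ex:factorials}, this gives $\log(S)_{1,j+1}=(j+1)!\,h_{j+1}=c_{j+1}$. Thus the claim amounts to showing $\Lambda(S)_{1j}=c_{j+1}$ for every $j\geq 1$. Applying the row identity \eqref{eq:Lambda(M) row 1} with $M=S$ rewrites this target in the equivalent form
\[
c_{j+1}\;=\;\sum_{k=1}^{j}\Lambda(S)_{1k}\,{j+1\brace k}\qquad(j\geq 1),
\]
so the task becomes to show that the sequence $(c_{k+1})_{k\geq 1}$ solves the same linear system that already determines the entries $\Lambda(S)_{1k}$.

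The main computation will be to establish the analogous convolution identity
\[
c_{j+1}\;=\;\sum_{k=1}^{j}c_{k+1}\,{j+1\brace k}\qquad(j\geq 1)
\]
directly from the functional equation \eqref{eq:functional equation for h'}, namely $h+h'=h'\circ(e^z-1)$. To this end I would expand $h'=\sum_{n\geq 1}\frac{c_{n+1}}{n!}z^n$ and invoke the standard egf identity $\frac{(e^z-1)^n}{n!}=\sum_{j\geq n}{j\brace n}\frac{z^j}{j!}$. Comparing the coefficients of $z^j/j!$ on both sides of \eqref{eq:functional equation for h'} then produces
\[
c_j+c_{j+1}\;=\;\sum_{n\geq 1}c_{n+1}\,{j\brace n};
\]
separating the $n=j$ term on the right (which equals $c_{j+1}$ since ${j\brace j}=1$) gives $c_j=\sum_{n=1}^{j-1}c_{n+1}\,{j\brace n}$, and the substitution $j\mapsto j+1$ rewrites this in the desired form.

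With both identities in hand, the difference $e_k:=\Lambda(S)_{1k}-c_{k+1}$ satisfies $\sum_{k=1}^{j}e_k\,{j+1\brace k}=0$ for every $j\geq 1$; since ${j+1\brace j}=\binom{j+1}{2}\neq 0$, a straightforward induction on $j$ yields $e_j=0$ for every $j\geq 1$, which is the claim. The step I expect to be most delicate is the coefficient extraction in the second paragraph: one must carefully coordinate the two normalizations (the ordinary coefficients of $h'$ versus the egf-normalized values $c_{n+1}$) so that the Stirling numbers appear with the correct indices, and then recognize that the $n=j$ term is precisely what must be split off from the Stirling sum to yield the shifted identity needed for the comparison with~$(\ast)$.
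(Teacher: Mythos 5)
Your proof is correct and follows essentially the same route as the paper: you establish the two linear relations
$\sum_{k=1}^j \Lambda(S)_{1k}{j+1\brace k}=c_{j+1}$ and $\sum_{k=1}^j c_{k+1}{j+1\brace k}=c_{j+1}$ and conclude by induction. The only cosmetic difference is in deriving the second relation from the functional equation $h'\circ(e^z-1)=h+h'$: you compare egf coefficients directly via $\frac{(e^z-1)^k}{k!}=\sum_{j}{j\brace k}\frac{z^j}{j!}$, whereas the paper packages the very same computation as the matrix identity $[h']\cdot S=[h+h']$ and reads off the $(1,j+1)$ entry.
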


\begin{proof}
As observed in \eqref{eq:Lambda(M) row 1},
\begin{equation}\label{eq:C1}
\sum_{k=1}^j \Lambda(S)_{1k} {j+1\brace k}=c_{j+1}\qquad\text{for $j\geq 1$.}
\end{equation}
On the other hand, by \eqref{eq:functional equation for h'} we have $[h']\cdot S = [h+h']$; thus
$$\sum_{k=1}^{j+1} c_{k+1} {j+1\brace k} = \sum_{k=1}^{j+1}\ [h']_{1k} S_{k,j+1} = ([h']\cdot S)_{1,j+1} = [h+h']_{1,j+1} = c_{j+1}+c_{j+2}$$ 
and hence
\begin{equation}\label{eq:C2}
\sum_{k=1}^{j} c_{k+1} {j+1\brace k} = c_{j+1}\qquad\text{for $j\geq 1$.}
\end{equation}
An easy induction on $j$ using \eqref{eq:C1} and \eqref{eq:C2} now yields $\Lambda(S)_{1j}=c_{j+1}= \log(S)_{1,j+1}$ for each $j\geq 1$, as claimed.
\end{proof}

\subsection*{Differential transcendence of the egf of $(c_n)$}
It is easy to see that for $n>0$, the $n$th iterate $\phi^{[n]}$ of $\phi=e^z-1$ is a solution of an ADE over $\Q$
 of order $n$. However, it is well-known that $\phi^{[n]}$ does not satisfy an ADE over $\C[z]$ of order $<n$. (See, e.g., \cite[Corollary~3.7]{AvdD2}.)
The egf of the sequence $(c_n)$ is $\itlog\phi$,
hence from Corollary~\ref{cor:BR} we obtain the fact (mentioned in the introduction) that this egf is differentially transcendental. 
In fact, Bergweiler \cite{Bergweiler} showed the more general result that if $f$ is (the Taylor series at $0$ of) any transcendental entire function, then $\itlog(f)$ is differentially transcendental (equivalently, by Corollary~\ref{cor:BR}, the family of iterates of $f$ is totally incoherent).
Moreover, by the results quoted at the end of the previous section, $\itlog\phi$ is not convergent. (This can also be shown directly; cf.~\cite{Lewin}.) See \cite{A} for a proof of a common generalization of these two facts.

\bibliographystyle{amsplain}

\end{document}